\documentclass{amsproc}

\usepackage{tikz-cd}
\usepackage{enumitem}
\setlist[description]{leftmargin=\parindent,labelindent=\parindent}
\usetikzlibrary{matrix,arrows,decorations.pathmorphing}
\usepackage{amssymb}
\usepackage{amsmath,amscd}
\usepackage{mathrsfs}
\usepackage{url}
\usepackage{cite}
\usepackage{fullpage}
\usepackage{hyperref}
\usepackage{verbatim}
\usepackage{comment}
\usepackage{fancyvrb}
\usepackage{fvextra}
\usepackage{caption}

\newtheorem{thm}{Theorem}[section]
\newtheorem{prop}[thm]{Proposition}
\newtheorem{lem}[thm]{Lemma}

\newtheorem{conj}[thm]{Conjecture}

\theoremstyle{definition}
\newtheorem{definition}[thm]{Definition}

\numberwithin{equation}{section}

\renewcommand{\L}{\mathcal{L}}

\newcommand{\Q}{\mathcal{Q}}

\newcommand{\B}{\mathcal{B}}

\newcommand{\zz}{\mathbb{Z}}

\newcommand{\qq}{\mathbb{Q}}

\newcommand{\C}{\mathcal{C}}

\newcommand{\M}{\mathcal{M}}

\newcommand{\pp}{\mathbb{P}}

\renewcommand{\H}{\mathcal{H}}
\newcommand{\F}{\mathcal{F}}
\renewcommand{\P}{\mathcal{P}}

\renewcommand{\O}{\mathcal{O}}

\DeclareMathOperator{\rank}{rank}

\DeclareMathOperator{\ch}{ch}

\DeclareMathOperator{\GL}{GL}

\DeclareMathOperator{\coker}{coker}

\DeclareMathOperator{\PGL}{PGL}

\DeclareMathOperator{\BGL}{BGL}
\DeclareMathOperator{\codim}{codim}

\renewcommand{\hat}{\widehat}
\renewcommand{\ch}{\mathsf{A}}
\usepackage{wrapfig}
\renewcommand{\aa}{\mathbb{A}}

\newcommand{\Mb}{\overline{\M}}

\numberwithin{equation}{section}

\begin{document}

\title{An introduction to the intersection theory of the moduli space of curves}

\author{Hannah Larson}
\address{Department of Mathematics, University of California, Berkeley, Evans Hall 827, Berkeley, CA 94720}
\email{hlarson@berkeley.edu}
\thanks{
The author would like to thank an anonymous referee for several helpful comments that improved this article.
This article was written during time that the author served as a Clay Research Fellow.}

\subjclass[2020]{Primary 14H10, 14C15, 14C17}
\date{\today}

\keywords{Moduli spaces of curves, intersection theory, tautological classes}

\begin{abstract}
We introduce the intersection theory of the moduli space of curves and its tautological ring. We survey open questions about the tautological ring and sketch techniques for proving the Chow ring is or is not generated by tautological classes.
\end{abstract}

\maketitle

\section{Overview}

The moduli space of curves $\M_g$ is a space in which each point corresponds to a genus $g$ curve. Even better, curves with certain properties, like being hyperelliptic, correspond to subvarieties of the moduli space. In order to understand how different properties of curves interact, we'd like to understand how these subvarieties intersect each other. This is the basic motivation for studying the intersection theory of $\M_g$.

\medskip
\begin{center}
\includegraphics[width=4in]{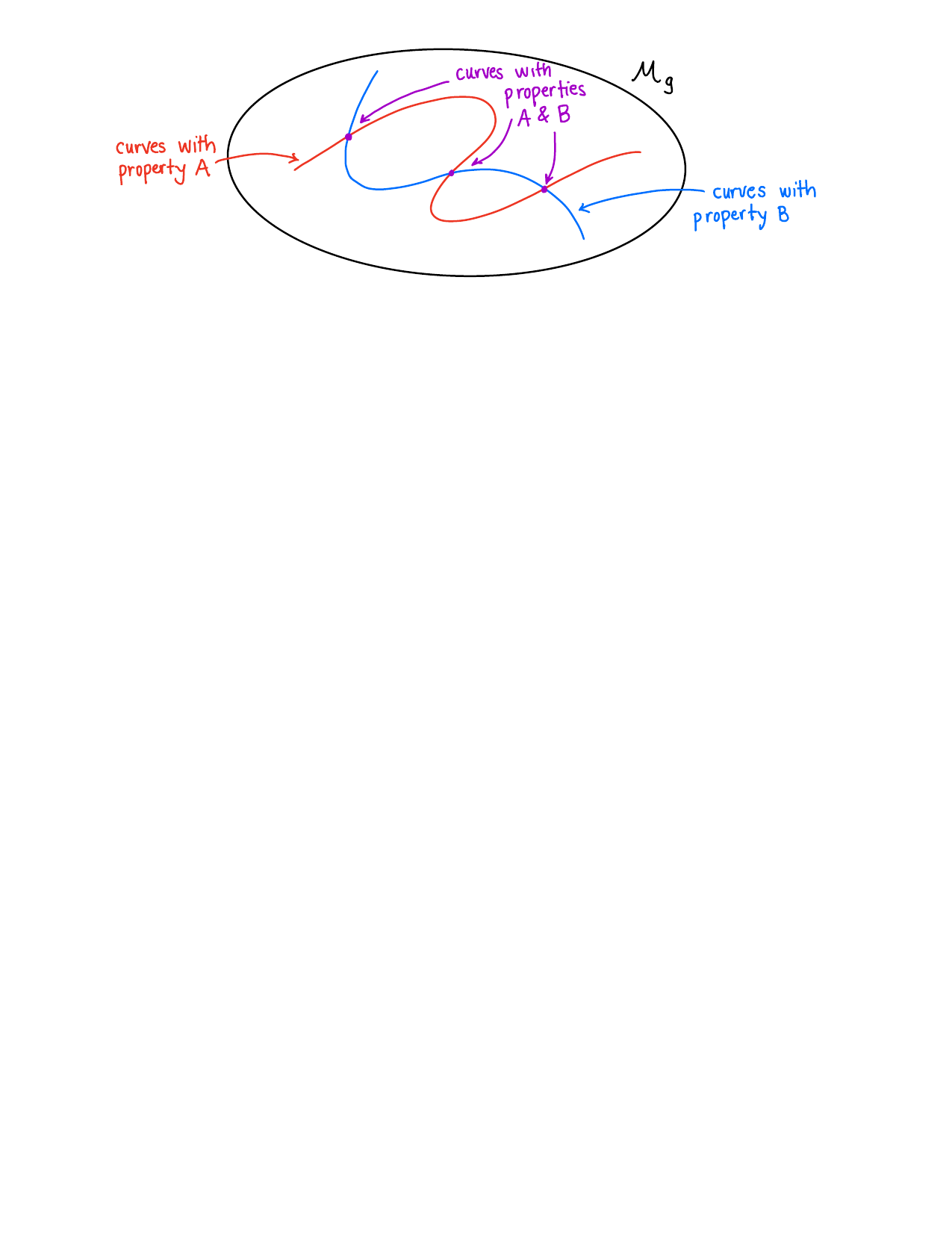}
\end{center}

In this survey article, we'll first define the \emph{Chow ring}, an invariant of a variety that keeps track of information about all its subvarieties and how they intersect each other. Then we'll see, through the example of the Grassmannian, how the Chow rings of \emph{moduli spaces} tend to inherit special classes by virtue of the geometry of their universal family. Such classes are called \emph{tautological classes}. Next, we'll define the tautological classes on $\M_g$ and survey several results and open questions about them. We are motivated particularly by the following two basic questions: When is the Chow ring generated by tautological classes? What relations exist among tautological classes? Finally, we'll take an inside look at two special cases which give a flavor of the state-of-the art tools used to address these questions. We'll sketch proofs that the Chow ring of $\M_3$ is generated by tautological classes, while the Chow ring of $\M_{12}$ is not generated by tautological classes.

\section{The Chow ring}

A basic philosophy in algebraic geometry is that we understand a variety or scheme better by understanding its subvarieties. The Chow group or Chow ring is an algebraic structure that will keep track of this information. 
A naive group that does this is the group of \emph{cycles}:
\[Z(X) := \zz \{\text{irreducible subvarities of $X$}\}. \]
This is way too big to be useful; instead we want to study subvarieties up to a meaningful equivalence relation. Two subvarities are \emph{rationally equivalent} if there is a family of subvarieties parametrized by $\pp^1$ interpolating between them:

\begin{center}
\includegraphics[width=2.5in]{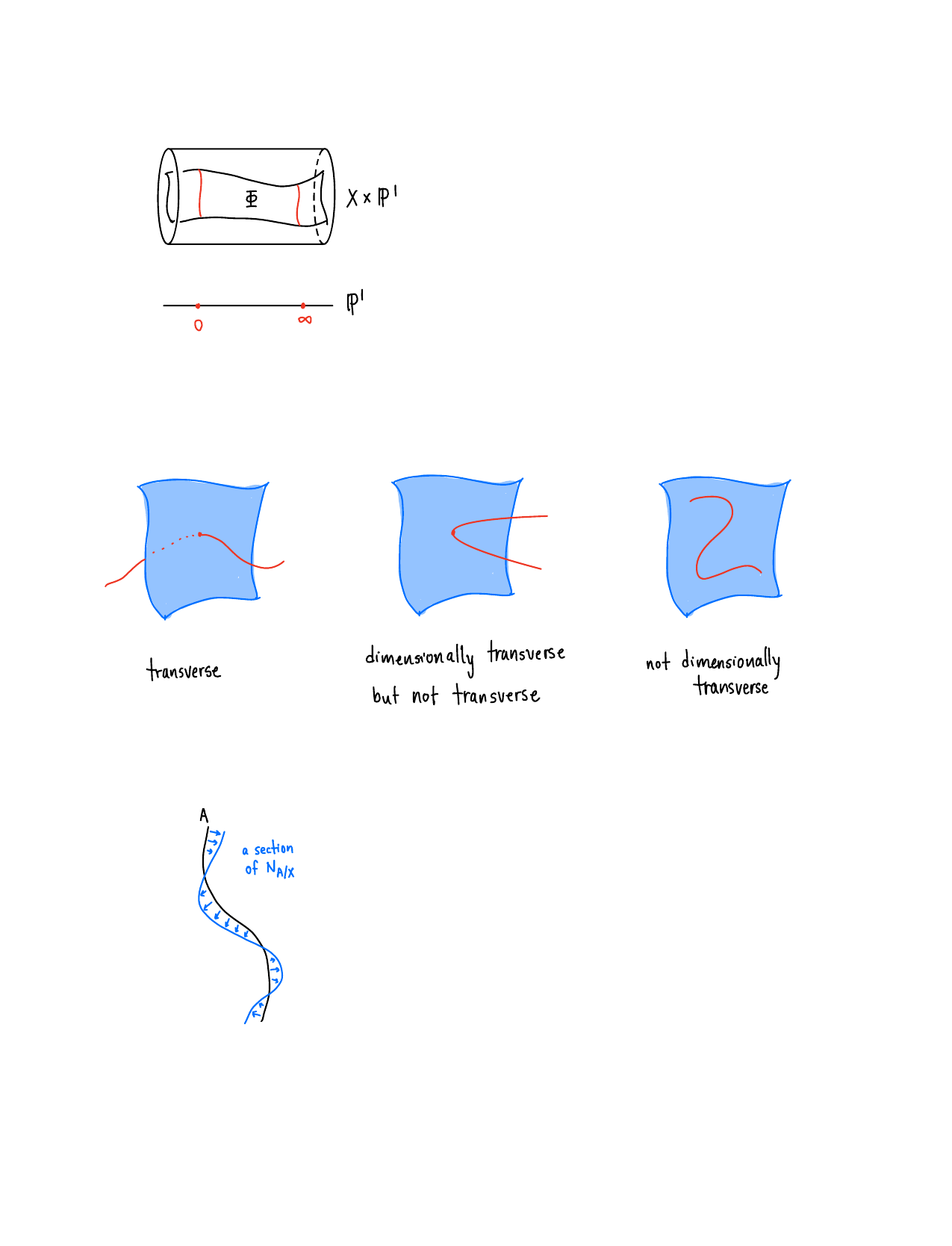}
\end{center}

The subgroup of rational equivalences is
\[\mathrm{Rat}(X) := \langle \{\Phi_\infty - \Phi_0 : \Phi \subset X \times \pp^1 \text{ with $\Phi \to \pp^1$ flat}\} \rangle \subset Z(X). \]

\begin{definition} Given a scheme $X$, its \emph{Chow group} is the group
\[\ch_*(X) = Z(X)/\mathrm{Rat}(X). \]
The lower star is the grading by dimension.
\end{definition}

If $Y$ is an irreducible subvariety, we write $[Y]$ for the equivalence class of $Y$ in $\ch_*(X)$; it is called the \emph{fundamental class of $Y$}. We extend this to reducible subvarities by declaring 
\[[Y_1 \cup Y_2] = [Y_1] + [Y_2].\]

The Chow groups have additional structure when $X$ is smooth and irreducible. In this case, we write $\ch^i(X) = \ch_{\dim X-i}(X)$ for the Chow group of codimension $i$ cycles. One can define an intersection product
\[\ch^i(X) \times \ch^j(X) \to \ch^{i+j}(X) \]
with the property that
\begin{equation} \label{gt} [A][B] = [A \cap B] \qquad \text{when $A, B$ \emph{generically transverse}}. \end{equation}
Two subvarieties are \emph{transverse} at $p$ if they are smooth along $p$ and $T_p A + T_p B = T_p X$, equivalently $\codim T_p A \cap T_p B = \codim A + \codim B$. 
\emph{Generically transverse} means this condition holds for a general $p$ in each component of $A \cap B$. We say $A$ and $B$ are \emph{dimensionally transverse} if $\codim A \cap B = \codim A + \codim B$. 

\begin{center}
\includegraphics[width=6in]{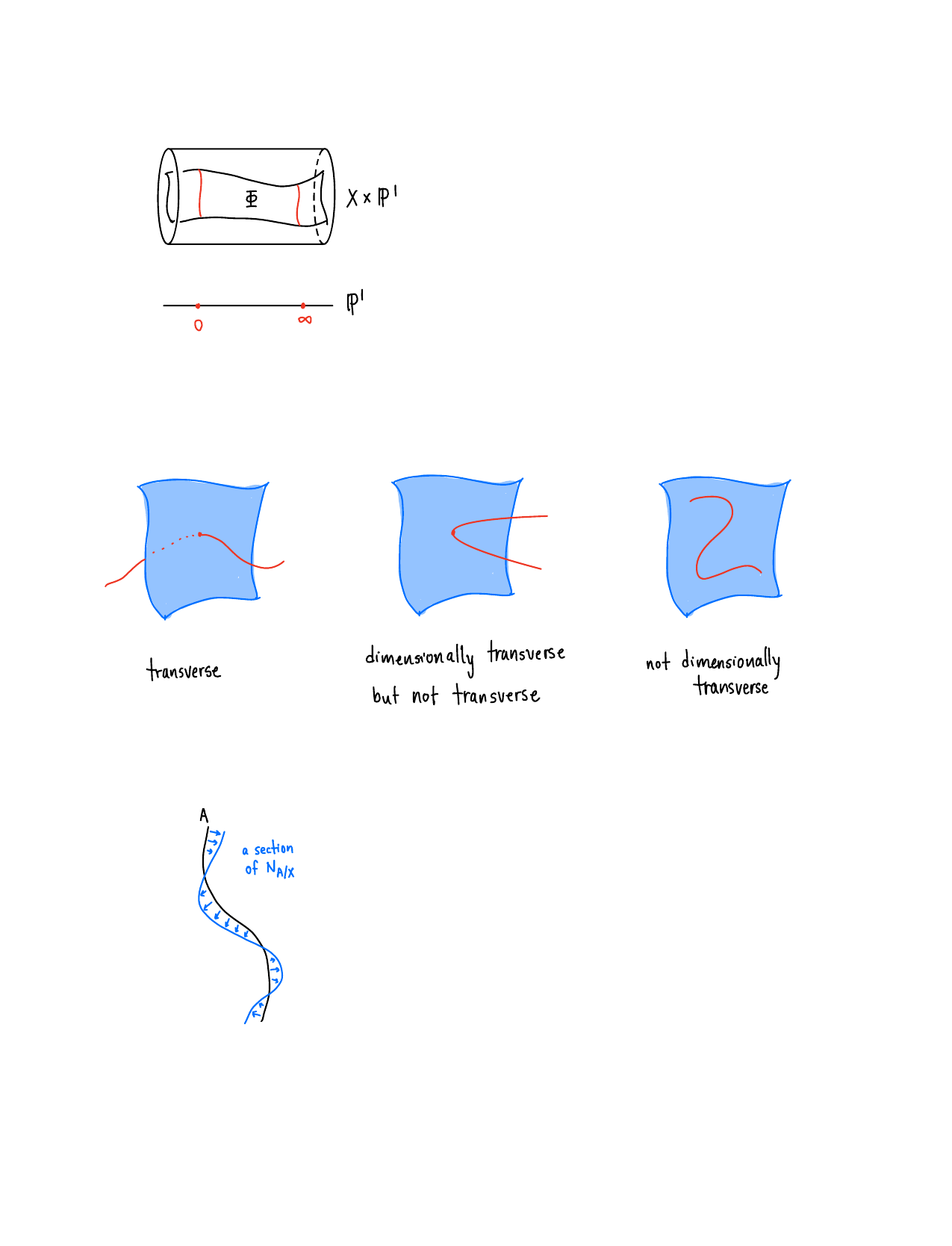}

{\color{white}aaa} \qquad transverse \qquad \qquad dimensionally transverse \qquad \qquad  \quad not dimensionally  \\
{\color{white}.} \qquad {\color{white} transverse} \qquad \qquad but not transverse \qquad \qquad \qquad \quad  transverse
\end{center}

\medskip
It's not at all obvious that an intersection product satisfying \eqref{gt} exists.
Intuitively, one might hope to prove such a product exists by establishing a ``moving lemma" which says it is always possible to express our classes in terms of linear combinations of cycles which are generically transverse. However, the solid and correct foundations for the theory rely on Segre classes and deformation to the normal cone, established by Fulton in the 1980s \cite{Fulton}.
We also refer the reader to \cite{3264} for further background on the Chow ring and enumerative problems. In the rest of this section, we briefly recall the basic properties we shall need.

\subsection{Proper pushforward}
When $f:X \subset Y$ is a closed subscheme, it's easy to see that there is a \emph{dimension-preserving} push-forward map
\[f_*: \ch_*(X) \to \ch_*(Y).\]
(View a cycle in $X$ as a cycle in $Y$. Any rational equivalence in $X$ defines a rational equivalence in $Y$.)
Less obvious is that for any \emph{proper} morphism $f: X \to Y$, there is a well-defined pushforward map
\[f_*: \ch_*(X) \to \ch_*(Y).\]
Written in terms of codimensions, this has the form
\begin{equation} \label{copush} 
f_*: \ch^i(X) \to \ch^{i-\dim X + \dim Y}(Y).
\end{equation}
Pushforward has the property that
\begin{equation} \label{pd}
f_*([A]) = \begin{cases} 0 & \text{if $\dim f(A) < \dim A$} \\ (\deg f|_A) \cdot [f(A)] & \text{if $\dim f(A) = \dim A$.}\end{cases}
\end{equation}
One extends to arbitrary cycles linearly.  
One important fact that we shall use is that if $f$ is proper and surjective, then $\ch_*(X) \otimes \qq \to \ch_*(Y) \otimes \qq$ is surjective.

\medskip
\noindent
\textit{Example:}
If $Y$ is a point, then the pushforward map $f_*: \ch_*(X) \to \ch_*(Y) = \zz$ is the \emph{degree map}. It sends a cycle of dimension zero to its degree (and vanishes on cycles of positive dimension).

\subsection{Pullback and affine bundles} Suppose now that $X$ and $Y$ are smooth.
If $f: X \to Y$ is flat, then there is a \emph{codimension-preserving} pullback map
\[f^*: \ch^*(Y) \to \ch^*(X) \]
defined by
\[f^*([B]) =[f^{-1}(B)].\]
Again, we extend to arbitrary cycles linearly. It's not hard to see that this respects rational equivalence when $f$ is flat: for any $\Phi \subset Y \times \pp^1$ giving a rational equivalence on $Y$, we have
 $(f \times 1)^{-1}(\Phi) \subset X \times \pp^1$ induces a rational equivalence of the preimages. The fact that this is a ring homomorphism follows from the familiar fact that $f^{-1}(A \cap B) = f^{-1}(A) \cap f^{-1}(B)$.
 
 Less obvious is that there is a well-defined pullback map along an arbitrary morphism of smooth varieties $f: X \to Y$ (or even when $X \to Y$ is a proper lci morphism). It has the property that 
 \[f^*([B]) = [f^{-1}(B)] \]
whenever $B$ is ``generically transverse to $f$." Showing this pullback is well-defined requires the moving lemma and some work. 

An important case of note is when $X \to Y$ is the total space of a vector bundle. In this case, the pullback map $\ch^*(Y) \to \ch^*(X)$ is an isomorphism. A special case of this is that $\ch^*(\mathbb{A}^n) = \ch^*(\mathrm{pt}) = \zz$. Here, we mean $\zz$ in degree $0$, generated by the fundamental class, and all other Chow groups vanish.

\subsection{The push-pull formula and self-intersection} \label{si}
If $f: X \to Y$ is a proper morphism between smooth schemes, then we have pushforward and pullback and they interact well:
\[ f_*([A]) \cdot [B] = f_*([A] \cdot f^*[B]).\]
This should feel intuitive on the level of subvarieties: the [image of $A$] intersected with $B$ should be the same as the image of [$A$ intersected with the preimage of $B$].

\medskip
\noindent
\textit{Example:} Suppose $\iota\colon D \subset X$ is a divisor. We have $[D] = c_1(\O(D)) \in \ch^1(X)$, so $[D]^2 = c_1(\O(D))^2$. Something better is true though: the answer is actually the pushforward of a class supported on $D$. Indeed, $[D] = \iota_*[1]$, so by the push-pull formula,
\begin{equation} [D]^2 = (\iota_*[1]) \cdot D = \iota_*(\iota^*[D])  \label{self-int}
\end{equation}
Here, $\iota^*[D] = c_1(\O(D)|_D)$. You may recognize the latter as $c_1(N_{D/X})$. This is an example of the \emph{self-intersection formula}, which in turn is a special case of the \emph{excess interseciton formula}, see e.g. \cite[Chapter 13]{3264}.

\subsection{Excision} \label{sec:ex}
Suppose $Z \subset X$ is a closed subscheme and $U = X \smallsetminus Z$ is the open complement. Then there is a right-exact sequence
\[\ch_{*}(Z) \rightarrow \ch_*(X) \rightarrow \ch_*(U) \rightarrow 0. \]
The first map is proper push forward and the second map is a flat pullback.
When $Z \subset X$ is smooth of codimension $c$, we'll also write
\[\ch^{*-c}(Z) \rightarrow \ch^*(X) \rightarrow \ch^*(U) \rightarrow 0. \]
Take care that the second map is a ring homomorphism but the first is not.

Many moduli spaces of interest, like the moduli space of curves and its compactification, admit geometrically meaningful stratifications. Excision implies that the Chow group is generated by lifts of the pushforwards of classes from each piece of the stratification. However, understanding the relations among these generators and the ring structure may be very challenging, and is sometimes
referred to as a ``patching problem."  One particularly nice case which is well understood is when $X$ admits an affine stratification.

\begin{lem} \label{affstrat}
If $X$ admits an affine stratification $X = \bigsqcup_i U_i$ where each $U_i \cong \mathbb{A}^{n_i}$, then $\ch_*(X)$ is freely generated as a group by $[\overline{U}_i]$.
\end{lem}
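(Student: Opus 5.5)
The plan is to induct on the number of strata, using the excision sequence of \S\ref{sec:ex} to peel off one affine cell at a time. I will assume, as is implicit in the notion of a stratification, that the strata can be ordered $U_1, \dots, U_m$ so that each $X_k := U_1 \sqcup \dots \sqcup U_k$ is closed in $X$. Then $U_k$ is open in $X_k$ with closed complement $X_{k-1}$.

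\emph{Generation.} Excision gives a right-exact sequence
\[\ch_*(X_{k-1}) \to \ch_*(X_k) \to \ch_*(U_k) \to 0.\]
Since $U_k \cong \aa^{n_k}$, the affine bundle statement gives $\ch_*(U_k) = \zz$, generated by the fundamental class $[U_k]$ in dimension $n_k$. Its lift to $X_k$ is the closure $[\overline{U}_k]$, because restricting $[\overline{U}_k]$ to the open set $U_k$ gives $[U_k]$. By induction, $\ch_*(X_{k-1})$ is generated by the pushforwards of $[\overline{U}_i]$ for $i<k$, and these pushforwards are the same classes in $X_k$. So $\ch_*(X_k)$ is generated by $[\overline{U}_1], \dots, [\overline{U}_k]$. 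This step is routine.

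\emph{Freeness.} Since $\ch_*(U_k) = \zz$ is free, the surjection onto it splits. Hence, by induction, it suffices to show that the pushforward $\ch_*(X_{k-1}) \to \ch_*(X_k)$ is injective. This is the main obstacle, because the excision sequence as stated is only right exact. My first approach is to extend it to the left using Bloch's higher Chow groups. The localization sequence
\[\ch_*(U_k,1) \xrightarrow{\partial} \ch_*(X_{k-1}) \to \ch_*(X_k) \to \ch_*(U_k) \to 0\]
reduces the problem to showing $\partial = 0$. By homotopy invariance, $\ch_*(\aa^{n},1) \cong \ch_*(\mathrm{pt},1)$. This group is nonzero only in dimension $n-1$, where it is $k^\times$, represented by the constant invertible functions on $\aa^n$.

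The boundary map sends a unit $f$ on $U_k$ to the divisor of $f$ viewed as a rational function on $\overline{U}_k$, restricted to the boundary. A constant $f$ has zero divisor, so $\partial = 0$. One could try to avoid higher Chow groups by tracking rational equivalences on $X_k$ directly: any relation among the classes coming from $X_{k-1}$ would be witnessed by rational functions on subvarieties of $X_k$, and one would need to show such witnesses can be replaced by ones living in $X_{k-1}$. This is exactly the content that $\partial = 0$ packages. I expect to cite the localization sequence rather than redo it.

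Combining the injectivity with the splitting gives
\[\ch_*(X_k) \cong \ch_*(X_{k-1}) \oplus \zz[\overline{U}_k].\]
Induction up to $X_m = X$ then shows that $\ch_*(X)$ is free on the classes $[\overline{U}_i]$.
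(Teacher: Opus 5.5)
Your proof is correct, but it gets independence by a different mechanism than the paper. Generation is handled the same way in both (excision plus $\ch_*(\aa^n)=\zz$).

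For freeness, the paper never extends the Chow excision sequence. It compares with Borel--Moore homology via the cycle class map, which is an isomorphism for such spaces \cite[Examples 1.9.1 and 19.1.11]{Fulton}. In homology the localization sequence is a genuine long exact sequence, and the cells have no odd-degree homology, so $H_{2p}(X_{k-1})\to H_{2p}(X_k)$ is injective. A diagram chase against the right-exact Chow sequence then transfers this injectivity back to Chow groups.

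You instead extend the Chow sequence itself to the left with Bloch's higher Chow groups. You then kill the connecting map out of $\ch_*(U_k,1)\cong \ch_{*-n_k}(\mathrm{pt},1)$ by noting that its only nonzero piece consists of constant units, whose divisors on $\overline{U}_k$ vanish. Equivalently, these classes are pulled back from a point and so extend over $X_k$.

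Your route is purely algebraic and works over an arbitrary ground field. Its cost is reliance on Bloch's localization and homotopy-invariance theorems, and the moving lemma behind them (stated for quasi-projective schemes), which are much deeper than the lemma itself. The paper's route needs a topological or $\ell$-adic homology theory, but it yields the stronger conclusion that $\ch_*(X)\to H_*(X)$ is an isomorphism with vanishing odd homology.

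One cosmetic point: you use $k$ both for the stratum index and for the ground field in $k^\times$.
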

Excision and the fact that $\ch^*(\mathbb{A}^n) = \zz$ show that the fundamental classes $[\overline{U}_i]$ generate $\ch_*(X)$. In general, the excision sequence is not left-exact, so it is not as clear that the classes $[\overline{U}_i]$ are all independent. However, this can be established using the cycle class map, which is an isomorphism for such spaces, see \cite[Examples 1.9.1 and 19.1.11]{Fulton}.

\medskip
The other way excision is used when we know $\ch^*(X)$ and we want to find $\ch^*(U)$.
This is often easier than a patching problem, but solving ``excision problems", i.e. finding the image of $\ch_*(Z) \to \ch_*(X)$, can still be hard.

\section{Grassmannians and $\BGL_k$}
Projective space, and more generally Grassmannians, admit affine stratifications, so Lemma \ref{affstrat} determines generators for their Chow groups. The more interesting part becomes identifying the \emph{ring} structure. After discussing these classical examples, we'll also discuss how to use them to make sense of the Chow ring of the classifying stack $\BGL_k$.

\subsection{Projective space}
Projective space has an affine stratificaiton
\begin{equation} \label{as}
\pp^n = \aa^n \sqcup \aa^{n-1} \sqcup \cdots \sqcup \aa^1 \sqcup \mathrm{pt}. 
\end{equation}
By excision the group $\ch_*(\pp^n)$ is generated by the fundamental classes of their closures:
\begin{equation} \label{gens}
 \zz[\pp^n] \oplus \zz[\pp^{n-1}] \oplus \cdots \zz[\pp^1] \oplus \zz[\mathrm{pt}] \twoheadrightarrow \ch_*(\pp^n).
 \end{equation}
Let $\zeta = [\pp^{n-1}]$ be the class of a hyperplane. Each $[\pp^{n-k}]$ is the transverse intersection of $k$ hyperplanes, so $[\pp^{n-k}] = \zeta^k$, down to $\zeta^n$ is the class of a point, and $\zeta^{n+1} = 0$. Since $\zeta^n \neq 0$, each of the $\zeta^k \neq 0$, so \eqref{gens} is an isomorphism.
As a \emph{ring}, we have
\[\ch^*(\pp^n) = \zz[\zeta]/(\zeta^{n+1}). \]

\subsection{The Grassmannian} \label{gkn}
The Grassmannian $G(k,n)$ is the moduli space of $k$-dimensional subspaces of an $n$-dimensional vector space. 

Generalizing \eqref{as},
the Grassmannian $G(k,n)$ admits an affine stratification into \emph{Schubert cells}. The cells of codimension $i$ are labeled by partitions $\lambda = (\lambda_1, \ldots, \lambda_k)$ with $\lambda_1 + \ldots + \lambda_k = i$ and $\lambda_j \leq n - k$. This is the archetypal example of a stratification of a moduli space.

Identifying $G(k,n)$ with the space of full rank $k \times (n - k)$ matrices modulo $\GL_k$,
each cell $\Sigma_{\lambda}^{\circ}$ can be represented by matrices in reduced row echelon form with a particular collection of pivots. For example in $G(2,4)$, the large open cell is 
\[\Sigma_{(0,0)}^\circ= \left\{ \mathrm{span} \left(\begin{matrix} 1 & 0 & * & *  \\ 0 & 1 & * & *\end{matrix} \right) \right\}. \]
The free entries $*$ make up the affine space. 
The cell for partition the $(2, 1)$ is obtained by moving the pivots over by $2$ and $1$ starting from the bottom:
\[\Sigma_{(2,1)}^\circ= \left\{ \mathrm{span} \left(\begin{matrix} 0 & 1 & * & 0  \\ 0 & 0 & 0 & 1\end{matrix} \right) \right\}. \]
 The closure $\Sigma_{(2,1)}$ is the subvariety of $2$-dimensional subspaces that are contained in $V(x_1)$ and meet $V(x_1, x_2, x_3)$ non-trivially, which is the union of $\Sigma_{2,1}^\circ$ and $\Sigma_{2,2}^\circ$.
In general, the closure of $\Sigma_\lambda^\circ$ is the union of Schubert cells $\Sigma_\mu^\circ$ where the Young diagram of $\mu$ contains that of $\lambda$.

By Lemma \ref{affstrat} the classes of the closures of the cells, $\Sigma_{\lambda}$, freely generate $\ch_*(G(k,n))$ as a group. 
The ring structure 
is determined by structure constants $c_{\lambda \mu}^{\nu}$ such that
\[[\Sigma_{\lambda}] [\Sigma_{\nu}] = \sum c_{\lambda \mu}^{\nu} [\Sigma_{\nu}].\] 
The $c_{\lambda \mu}^{\nu}$ are called \emph{Littlewood--Richardson coefficients}, and multiplying these classes is called \emph{Schubert calculus}.  See \cite[Chapter 4]{3264} for a nice exposition.

\subsection{Tautological classes}
The classes of Schubert varieties give us additive generators for $\ch^*(G(k,n))$, but far fewer classes are needed to generate $\ch^*(G(k,n))$ as a ring. It turns out that one nice collection of generators are the fundamental classes $[\Sigma_\lambda]$ where $\lambda$ has one part. These are called \emph{special Schubert varieties}. If $\lambda$ has one part, then only the last pivot is sent over to the right.
This tells us that $\Sigma_p$ is the locus of $k$-dimensional subspaces $\Lambda$ that meet a fixed $n - k -p + 1$-dimensional subspace non-trivially:
\[\Sigma_p = \{\Lambda : \Lambda \cap F_{n-k-p+1} \neq 0\}. \]
The \emph{Giambelli formula} expresses each $[\Sigma_{\lambda}]$ in terms of special Schubert classes, so special Schubert classes are ring generators for $\ch^*(G(k,n))$.

How do we understand the relations among these multiplicative generators? The answer comes from relating the special Schubert classes to the universal family:
\[0 \rightarrow \mathcal{S} \rightarrow \O^{\oplus n} \rightarrow \Q \rightarrow 0. \]
Suppose that $F_{n - k - p + 1} = \langle v_1, \ldots, v_{n - k - p + 1} \rangle$. Each vector $v_i$ corresponds to a coordinate section of $\O^{\oplus n}$, which gives rise to a section $\overline{v}_i$ of $\Q$. Typically the sections $\overline{v}_i$ of $\Q$ are independent in $\Q$, but they become dependent precisely when $\Lambda \cap \langle v_1, \ldots, v_{n - k - p + 1} \rangle \neq 0$.
\[\Sigma_p = \text{locus where $\overline{v}_1, \ldots, \overline{v}_{n - k - p + 1}$ become dependent}. \]
The locus where $n -k - p + 1 = \rank(\Q) - p + 1$ sections of $\Q$ become dependent is the Chern class $c_p(\Q)$. (If you haven't seen Chern classes before, you can take this as the definition; more generally, this works to define the Chern classes of any globally generated vector bundle. See \cite[Chapter 5]{3264} for an introduction to Chern classes.)

The Chern classes of $\Q$ are our first example of a \emph{tautological class}.  This term was coined by Mumford in his 1983 paper \cite{mum}, where he wrote

\medskip
\emph{``Whenever a variety or topological space is defined by some universal property, one expects that by virtue of its defining property, it possesses certain classes called tautological classes."}
\smallskip

In our case, the Grassmannian is defined by a universal property (a map $X \to G(k, n)$ is the same as the data of a rank $n-k$ quotient of a rank $n$ trivial bundle on $X$) and $\Q$ is the universal family of such quotients (each rank $n-k$ quotient of a rank $n$ trivial bundle on $X$ is the pullback of $\Q$ under the corresponding map $X \to G(k, n)$).
The Chern classes of $\Q$ therefore fit Mumford's idea of a tautological class. In general, to make precise statements, one has to define what the tautological classes on a moduli space are. There are generally accepted definitions of tautological classes for well-studied spaces like Grassmannians, moduli spaces of curves \cite{mum}, abelian varieties \cite{vd}, and K3 surfaces \cite{MOP}. They typically involve finding a vector bundle and taking Chern classes, but may also involve pushing forward and pulling back along natural maps.

Chern classes play well in short exact sequences, and one consequence of this is that
\[\frac{1}{1 + c_1(\Q) + \ldots + c_{n - k}(\Q)} = 1 + c_1(\mathcal{S}) + \ldots + c_k(\mathcal{S}). \]
The expression on the left is formally inverted using the identity $1/(1 + x) = 1 - x + x^2 - \ldots$.
The right-hand side tells us that, when we perform this expansion, all  terms in degree above $k$ vanish. It turns out that these generate all relations among $c_1(\Q), \ldots, c_{n - k}(\Q) \in \ch^*(G(k,n))$.
Thus, we conclude with a very nice description of the Chow ring of $G(k,n)$. It is generated tautological classes, and we can understand the relations geometrically (they all come from the fact that the kernel of the quotient has rank $k$ and therefore its Chern classes vanish in degree $> k$):
\[\ch^*(G(k, n)) = \zz[c_1(\Q), \ldots, c_{n-k}(\Q)]/
\left\langle \left \{\frac{1}{1 + c_1(\Q) + \ldots + c_{n - k}(\Q)} \right\}^j : j > k \right\rangle.
\]
We could also write the Chow ring in terms of the Chern classes of the tautological subbundle:
\begin{equation} \label{sub} \ch^*(G(k, n)) = \zz[c_1(\mathcal{S}), \ldots, c_{k}(\mathcal{S})]/
\left\langle \left \{\frac{1}{1 + c_1(\mathcal{S}) + \ldots + c_{k}(\mathcal{S})} \right\}^j : j > n - k \right\rangle.
\end{equation}
These two presentations are related to each other via the isomorphism between $G(k, n)$ and $G(n-k,n)$ obtained by taking duals.

\subsection{The moduli stack of vector bundles}
The theory of Chow rings has been extended to smooth algebraic stacks admitting a stratification by quotient stacks \cite{Kresch}. Here, we explain one of the key ideas developed by Edidin and Graham \cite{EG}, which applies to quotients of algebraic varieties (or algebraic spaces) by linear algebraic groups. In essence, excision and the isomorphism for Chow rings of affine bundles allows us to figure out what the Chow rings of such quotient stacks should be.

Suppose $\mathcal{X}$ is a stack and $\mathcal{V} \to \mathcal{X}$ is a vector bundle. We would certainly like it to be true that $\ch^*(\mathcal{X}) \cong \ch^*(\mathcal{V})$. Suppose $\mathcal{U} \subset \mathcal{V}$ is an open substack whose complement has codimension $c$. By the excision sequence, we would certainly like it to be true that $\ch^i(\mathcal{V}) \cong \ch^i(\mathcal{U})$ for $i < c$.
Now if $\mathcal{U}$ happens to be a scheme, we already know what we mean by its Chow groups and we could declare that $\ch^i(\mathcal{X}) = \ch^i(\mathcal{U})$. We think of $\mathcal{U}$ as a ``model" of $\mathcal{X}$: Up to affine bundles and loci of high codimension --- which we know only changes intersection theory in high degrees --- $\mathcal{U}$ behaves the same as $\mathcal{X}$. 

We can make this into a definition if we show it is independent of choices. Indeed, if we picked another vector bundle $\mathcal{V}'  \to \mathcal{X}$ and an open substack $\mathcal{U}' \subset \mathcal{V}'$ whose complement has codimension $c' > i$, then we find the following equalities of Chow groups of schemes:
 \[ \ch^i(\mathcal{U}') = \ch^i(\mathcal{U}' \times_{\mathcal{X}} \mathcal{V}) =  \ch^i(\mathcal{U}' \times_{\mathcal{X}} \mathcal{U}) = \ch^i(\mathcal{V}' \times_{\mathcal{X}} \mathcal{U})
 = \ch^i(\mathcal{U}).
  \]
Equalities hold at each step because either one space is a vector bundle over the other, or the complement of one in the other has codimension greater than $i$.
We can also use this to determine the multiplicative structure. Since any two classes live in fixed finite degree we just need to find a model $\mathcal{U}$ that approximates $\mathcal{X}$ beyond the sum of their degrees.

Let's see how this works in action with $\BGL_k = [\mathrm{pt}/\GL_k]$, the moduli stack of rank $k$ vector bundles. Consider the vector space $\mathbb{A}^{k(n-k)}$ of $k \times (n - k)$ matrices. Let $\GL_k$ act by left multiplication. The quotient stack $[\mathbb{A}^{k(n-k)}/\GL_k]$ is a vector bundle over $\BGL_k$. Now consider the open subset $U \subset \mathbb{A}^{k(n-k)}$ of full rank matrices. The complement of $U$ is the determinantal locus of matrices of rank $< k$ and so has codimension $(\dim \ker)(\dim \coker) = n - k + 1$. Taking the quotient by $\GL_k$, we obtain an open substack $[U/\GL_k] \subset [\mathbb{A}^{k(n-k)}/\GL_k]$ whose complement has codimension $n - k + 1$. But $[U/\GL_k]$ is the Grassmannian, so we know its Chow groups! Hence, for any $i < n - k + 1$, we have $\ch^i(\BGL_k) = \ch^i(G(k, n))$. Now look back at \eqref{sub}. In degrees $i < n - k + 1$, $\ch^i(G(k,n))$ agrees with the free polynomial ring $\zz[c_1, \ldots, c_k]$. This holds for any value of $n$. Taking larger and larger $n$, we conclude that 
\[\ch^*(\BGL_k) = \zz[c_1, \ldots, c_k].\]
The classes $c_1, \ldots, c_k$ are the universal Chern classes. Again, Mumford's philosophy shines through. The stack $\BGL_k$ has a universal rank $k$ vector bundle on it and the Chern classes of this universal bundle give rise to distinguished classes in the Chow ring. As was the case for the Grassmannian, these tautological classes generate the Chow ring. Unlike for the Grassmannian, there are no relations. 

This discussion shows that we can think of the Grassmannian as a sort of ``finite incarnation" or approximation of $\BGL_k$.
Note that the relations among $c_1(\mathcal{S}), \ldots, c_k(\mathcal{S})$ in the Grassmannian $G(k, n)$ start in codimension $n - k + 1$, exactly the codimension of the complement of the full rank matrices inside the space of all matrices.

\section{Moduli spaces of curves}
\subsection{Comment on rational coefficients and stacks} \label{comment}
Starting now, we will work with Chow rings with rational coefficients, i.e. $\ch^i(X)$ now means $\ch^i(X) \otimes \qq$. The integral intersection theory of $\M_g$ is also an active area of research, but it can be quite subtle. For example, the integral Chow ring of $\M_g$ is only known for $g =2$ \cite{Vistoli}, whereas with rational coefficients it is known for $g \leq 9$ \cite{mum,F2,F3,Iz,PV,789}. When we work with rational coefficients we are allowed to conflate a Deligne--Mumford stack with its coarse moduli space since they have the same rational Chow ring.
This is not true integrally though. For example, for all $i > 0$, the integral Chow group $\ch^i(\mathrm{B}(\zz/2)) = \zz/2$. Meanwhile, the coarse moduli space of  $\mathrm{B}(\zz/2)$ is just a point, which has $\ch^i(\mathrm{pt}) = 0$ for all $i > 0$. Nevertheless, once we tensor with $\qq$ these two agree, as promised.

\subsection{Tautological classes}
Consider the universal curve $f: \C \to \M_g$. 
\begin{center}
\includegraphics[width=2in]{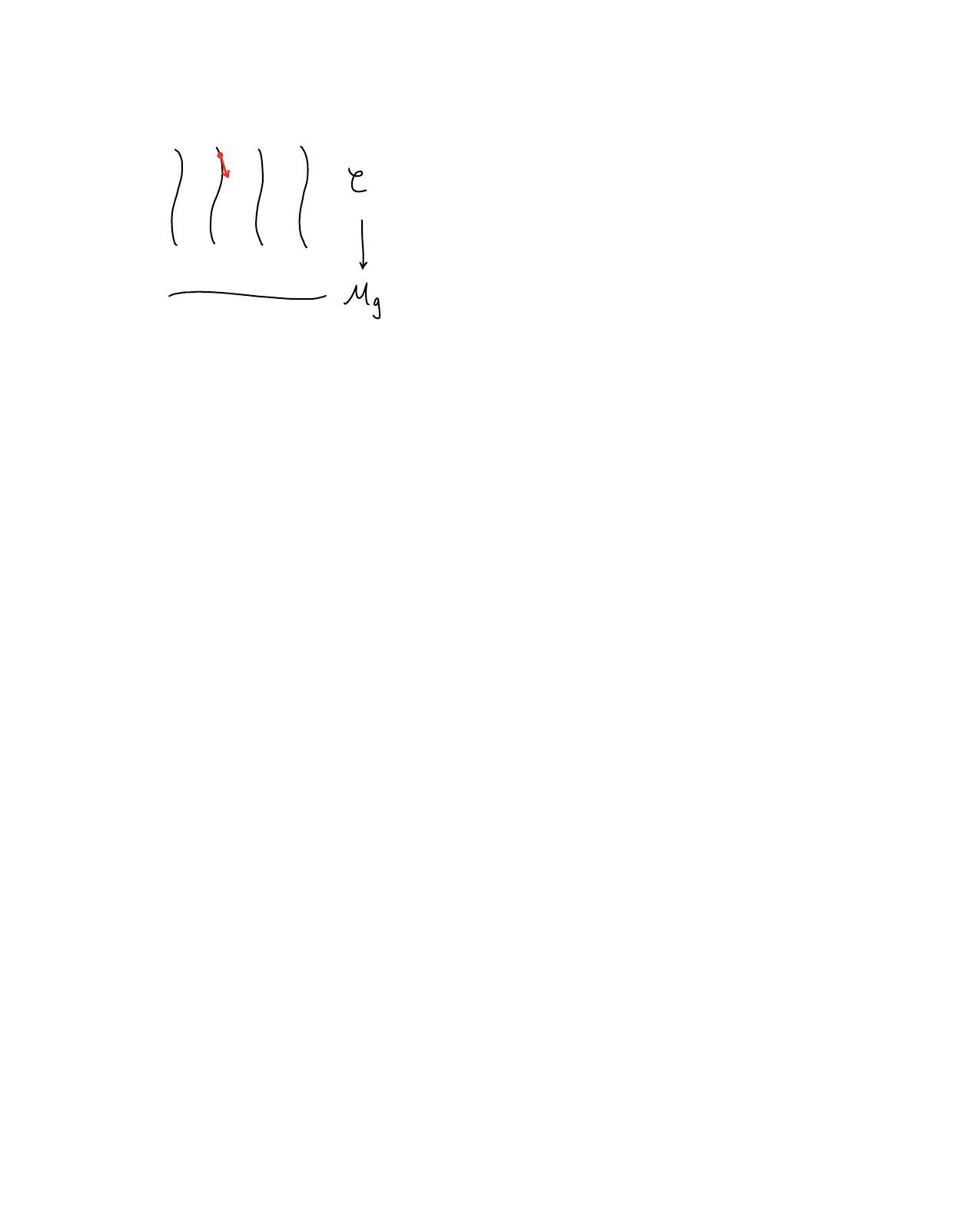}
\end{center}
Unlike the Grassmannian we don't have a vector bundle staring us in the face, but when we see something nonlinear, we should try to find a linear approximation. Consider the relative dualizing sheaf $\omega_f$, which, as $f$ is smooth, agrees with the relative cotangent bundle. This line bundle is dual to the line bundle on $\C$ whose fiber at each point is the vertical tangent space through $p$, pictured in red above.

Taking Chern classes, we obtain $c_1(\omega_f) \in \ch^1(\C)$.  In order to get a classes on $\M_g$, we use proper pushforward. If we just push forward $c_1(\omega_f)$ we get a class in degree $0$, see \eqref{copush}. In fact, using \eqref{pd} the class we get is $2g - 2$ (the degree of $\omega_f$ on fibers) times the fundamental class of $\M_g$. To get classes in higher codimension, we use the multiplicative structure in $\ch^*(\C)$ to take powers before pushing forward. We define the \emph{kappa classes}
\[\kappa_j := f_*(c_1(\omega_f)^{j+1}) \in \ch^i(\M_g). \]

There is another natural way to cook up a vector bundle from the universal curve, which is to pushforward $\omega_f$. By cohomology and base change, $\mathbb{E} := f_*\omega_f$ is a vector bundle of rank $g$ whose fiber the point $[C] \in \M_g$ is $H^0(\omega_C)$. This vector bundle is called the \emph{Hodge bundle} and its Chern classes are called the \emph{lambda classes}
\[\lambda_i := c_i(\mathbb{E}).\]

It turns out that the lambda classes are expressible in terms of the kappa classes.
This is a consequence of \emph{Grothendeick--Riemann--Roch}, which relates the Chern classes of a pushforward of a sheaf to pushforwards of expressions involving Chern classes of the original sheaf, see \cite[Chapter 14]{3264}. 
This formula involves the Chern character, $\mathrm{Ch}$, and Todd class, $\mathrm{Td}$, which are given by polynomial expressions in terms of usual Chern classes.
In our context, the formula says
\begin{equation} \label{grr} \mathrm{Ch}(\mathbb{E}) - \mathrm{Ch}(\O) = \mathrm{Ch}(f_*\omega_f) - \mathrm{Ch}(R^1f_*\omega_f) = f_*(\mathrm{Ch}(\omega_f) \mathrm{Td}(\omega_f^\vee)). 
\end{equation}
Expanding the right-hand side we obtain:
\[f_*\left(\left( 1 + c_1(\omega_f) + \frac{1}{2}c_1(\omega_f)^2 + \ldots \right) \left( 1 - \frac{1}{2}c_1(\omega_f) + \frac{1}{12} c_1(\omega_f)^2 + \ldots \right) \right), \]
from which we see it involves pushforwards of powers of $c_1(\omega_f)$, i.e. kappa classes. The term on the right-hand side of \eqref{grr} in degree $1$ comes from pushing forward the degree $2$ term of the expression in parentheses. Multiplying this out and collecting terms, we find
\[\lambda_1 = c_1(\mathbb{E}) = \mathrm{Ch}_1(\mathbb{E}) = f_*\left( \frac{1}{12} c_1(\omega_f)^2 \right) = \frac{1}{12} \kappa_1.\]
The rest of the lambda classes can similarly be expressed explicitly in terms of kappa classes, see \cite[Corollary 6.2]{mum}.

Much like we've seen with the lambda classes, lots of other ways one might think to try to construct classes in $\ch^*(\M_g)$ from the universal family (e.g. take tensor powers of $\omega_f$ and then push forward) all yield classes expressible in terms of the kappa classes. This is part of the motivation behind the following definition.

\begin{definition} \label{taut-def}
The tautological ring $\mathsf{R}^*(\M_g) \subset \ch^*(\M_g)$ is the subring generated by the kappa classes $\kappa_1, \kappa_2, \ldots$.
\end{definition}
\noindent
Thinking back to the Grassmannian, we now ask two questions: 
\begin{enumerate}
\item What are the relations among the tautological classes? Particularly, can we find a geometric explanation for them? 
\item Do the tautological classes generate the entire Chow ring? If not, what can we say about non-tautological classes?
\end{enumerate}

\subsection{Tautological relations} \label{tr}
The study of tautological relations has a long and interesting history. In \cite{F1}, Faber gave a geometric machine for producing relations among the kappa classes, which we will explain below. The idea is reminiscent of ideas we saw for the Grassmannian.

Consider the $d$th power of the universal curve $\pi: \C^d \to \M_g$. Each point on $\C^d$ corresponds to a curve $C$ together with $d$ (not necessarily distinct) points $p_1, \ldots, p_d$ on it. Let $\Omega_d \to \C^d$ be the rank $d$ vector bundle whose fiber at a point $[C, p_1, \ldots, p_d]$ is $H^0(C, \omega_C|_{p_1+\ldots + p_d})$. We will also make use of $\pi^*\mathbb{E}$, whose fiber at $[C, p_1, \ldots, p_d]$ is $H^0(\omega_C)$. Now consider the map
\begin{equation} H^0(\omega_C) \to H^0(\omega_C|_{p_1 + \ldots + p_d}). \label{fh0}
\end{equation}
The kernel is $H^0(\omega_C(-p_1 - \ldots - p_d))$ which vanishes if $d > 2g - 2$. If $d > 2g - 2$, then \eqref{fh0} globalizes to an injective map of vector bundles $\pi^*\mathbb{E} \rightarrow \Omega_d$ on $\C^d$. Thus, we obtain a short exact sequence
\[0 \rightarrow \pi^*\mathbb{E} \rightarrow \Omega_d \rightarrow Q \rightarrow 0\]
where the quotient $Q$ is locally free with $\rank Q = \rank \Omega_d - \rank \pi^*\mathbb{E} = d - g$.
In particular its Chern classes vanish in degree above its rank:
\begin{equation} \label{ciQ} 0 = c_i(Q) = \left\{\frac{1 + c_1(\Omega_d) + \ldots + c_d(\Omega_d)}{1 + \pi^*\lambda_1 + \ldots + \pi^*\lambda_g} \right\}^i \qquad \qquad \text{for $i > d - g$.}
\end{equation}
This gives relations on $\C^d$, which we can use to produce relations on $\M_g$. Indeed,
we can multiply the $c_i(Q)$ by diagonals in $\C^d$ and pullbacks of $c_1(\omega_f)$ along projections $\C^d \to \C$, and then push the result forward to $\M_g$. As you might imagine, this results in many non-trivial polynomials in the kappa classes. These will all be relations in the tautological ring whenever $i > d - g$. To see why this actually gives tautological classes, we need dig into the Chern classes of $\Omega_d$. The reader wishing to skip technical details should skip ahead to Section \ref{con}.

\subsubsection{Chern classes of $\Omega_d$}
Above we described $\Omega_d$ by its fibers, but to rigorously construct this vector bundle and understand its Chern classes, we will use cohomology and base change. Let $\alpha: \C^{d+1} \to \C^d$ be projection onto the first $d$ factors. We will write $D_{i,j} \subset  \C^{d+1}$ for the diagonal where the $i$th and $j$th points agree. The diagonals $D_{i,d+1}$ determine sections of $\alpha$, and $\alpha$ equipped with these sections is the universal curve with $d$ (not necessarily distinct) marked points over $\C^d$.

Now consider the following sequence of sheaves on $\C^{d+1}$:
\begin{equation} \label{Fseq} 0 \rightarrow \omega_\alpha(-D_{1,d+1} - \cdots - D_{d,d+1}) \rightarrow \omega_\alpha \rightarrow \F := \omega_\alpha|_{D_{1,d+1} + \cdots + D_{d,d+1}} \rightarrow 0.
\end{equation}
When we apply pushforward by $\alpha$, we obtain the long exact sequence of sheaves on $\C^d$:
\[0 \rightarrow  \alpha_*\omega_\alpha(-D_{1,d+1} - \cdots - D_{d,d+1}) \rightarrow \alpha_*\omega_\alpha \rightarrow \alpha_*\F \rightarrow \cdots \]
By cohomology and base change, the fibers of this sequence over $[C,p_1, \ldots, p_d]$ are
\[0 \rightarrow H^0(\omega_C(-p_1 - \cdots -p_d)) \rightarrow H^0(\omega_C) \rightarrow H^0(\omega_C|_{p_1 + \ldots + p_d}) \rightarrow \cdots \]
In particular, $\alpha_*\omega_\alpha(-D_{1,d+1} - \cdots - D_{d,d+1}) = 0$, we recognize $\alpha_*\omega_\alpha = \pi^*\mathbb{E}$, and finally, $\alpha_*\F$ is $\Omega_d$.
This description of $\Omega_d$ is helpful because 
we can use Grothendieck--Riemann--Roch to determine its Chern classes. Note that $R^1\alpha_* \F = 0$, since $\F$ is supported on the sections and hence, $H^1$ of its restrictions to fibers of $f$ vanish.
Thus, we have
\begin{equation} \label{rr} \mathrm{Ch}(\Omega_d) = \mathrm{Ch}(\alpha_*\F) - \mathrm{Ch}(R^1\alpha_*\F) =\alpha_*(\mathrm{Ch}(\F) \mathrm{Td}(\omega_\alpha^\vee)). 
\end{equation}
Let us write $K_i =\mathrm{pr}_i^* c_1(\omega_f)$. For example, $c_1(\omega_{\alpha}) = K_{d+1}$. Now, the sequence \eqref{Fseq} determines the Chern classes of $\F$:
\[c(\F) =\frac{c(\omega_\alpha)}{c(\omega_\alpha(-D_{1,d+1} - \cdots - D_{d,d+1})} = \frac{1 + K_{d+1}}{1 + K_{d+1} - D_{1,d+1} - \cdots - D_{d,d+1}}.\]
In the last term above, to avoid cluttered notation, we are also writing $D_{i,j}$ for its fundamental class.
In particular, the above equation implies that the term in each degree of the right hand side of \eqref{rr} is a polynomial in the $K_i$ and $D_{ij}$.

\begin{lem}[see p.\ 10 of \cite{F1}] \label{apush}
The $\alpha$ pushforward of any polynomial in the $K_i$ and $D_{ij}$  (with $1 \leq i, j \leq d+1$)  is a polynomial in the $K_i$ and $D_{ij}$ (with $1 \leq i, j \leq d$) and $\pi^*\kappa_i$.
\end{lem}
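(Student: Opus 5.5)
Since pushforward is linear, it suffices to treat a single monomial $M$ in the $K_i$ and $D_{ij}$ with $1 \le i,j \le d+1$. Classes not involving index $d+1$ (the $K_i$ and $D_{ij}$ with $i,j \le d$) are pullbacks $\alpha^*$ of the classes with the same names on $\C^d$. By the push-pull formula of Section \ref{si}, they factor out of $\alpha_*$. So we may assume
\[M = K_{d+1}^{a} \prod_{i \le d} D_{i,d+1}^{b_i},\]
and we want $\alpha_*M$ to be a polynomial in the $K_i$, $D_{ij}$ ($i,j \le d$) and $\pi^*\kappa_j$.

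The plan is to reduce the number of distinct diagonal factors, using two relations on $\C^{d+1}$.
\begin{enumerate}
\item For $i \neq j \le d$,
\[D_{i,d+1}\cdot D_{j,d+1} = \alpha^*D_{ij}\cdot D_{i,d+1},\]
since both sides are the class of the transverse small diagonal $\{p_i = p_j = p_{d+1}\}$. This uses that $D_{i,d+1}$ is the image of a section of the smooth map $\alpha$.
\item The self-intersection formula \eqref{self-int} gives
\[D_{i,d+1}^2 = D_{i,d+1}\cdot c_1(N) = -D_{i,d+1}\cdot K_{d+1}.\]
Here the normal bundle of a section of a smooth curve family is the relative tangent bundle restricted to that section, so $c_1(N) = -K_{d+1}|_{D_{i,d+1}}$.
\item On $D_{i,d+1}$ we also have $K_{d+1} = \alpha^*K_i$, since the two projections agree there.
\end{enumerate}

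Applying relation 1 repeatedly, any monomial containing at least one diagonal becomes (pullback classes)$\,\cdot\, D_{i,d+1}^{b}K_{d+1}^a$ for a single $i$. Relation 2 then reduces this to $\pm D_{i,d+1}K_{d+1}^{a+b-1}$, and relation 3 rewrites it as $\alpha^*(K_i^{a+b-1})\cdot D_{i,d+1}$. Since $D_{i,d+1}$ is the image of a section $\sigma_i$ of $\alpha$, we get $\alpha_*[D_{i,d+1}] = 1$. Hence $\alpha_*$ of this class is $\pm K_i^{a+b-1}$ times the pulled-back factors, which is a polynomial of the required form.

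The remaining case is a monomial with no diagonals, i.e. $K_{d+1}^a$. The square
\[\begin{array}{ccc} \C^{d+1} & \xrightarrow{\ \mathrm{pr}_{d+1}\ } & \C \\ \downarrow \alpha & & \downarrow f \\ \C^d & \xrightarrow{\ \pi\ } & \M_g \end{array}\]
is Cartesian with flat horizontal maps, so flat base change gives
\[\alpha_* \mathrm{pr}_{d+1}^* c_1(\omega_f)^a = \pi^* f_* c_1(\omega_f)^a = \pi^*\kappa_{a-1}.\]
For $a = 0,1$ this gives $0$ and $2g-2$ respectively, interpreting $\kappa_0 = 2g-2$ as in the discussion of kappa classes.

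I expect the main obstacle to be justifying the relations rigorously, particularly the transversality claim in relation 1 and the normal bundle identification in relation 2. I would check these in local coordinates on the fiber product of the smooth family: the diagonals $D_{i,d+1}$ are smooth divisors, and pairwise they meet transversally in codimension $2$.
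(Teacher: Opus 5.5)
Your proof is correct and is essentially the paper's argument. You reduce by push-pull to monomials in $K_{d+1}$ and the $D_{i,d+1}$, and you use the self-intersection identity $D_{i,d+1}^2 = -D_{i,d+1}K_{d+1}$; your relations 1 and 3 are the paper's $\iota^*D_{i,d+1} = D_{1,i}$ and $\iota^*K_{d+1} = K_1$ rewritten via push-pull, followed by $\alpha_*\iota_* = \mathrm{id}$. The differences are only cosmetic: you collapse distinct diagonals before reducing exponents, and you justify $\alpha_*K_{d+1}^a = \pi^*\kappa_{a-1}$ explicitly by flat base change, where the paper simply states it.
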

\begin{proof}
Since the $D_{i,j}$ with $i, j < d+1$ are pulled back along $\alpha$, using the push-pull formula, it suffices to study the $\alpha$ pushforwards of polynomials in $K_{d+1}$ and $D_{i,d+1}$.
Consider a monomial $K_{d+1}^a D_{1,d+1}^{b_1} \cdots D_{d,d+1}^{b_d}$.  If all $b_i = 0$, then 
\[\alpha_* K_{d+1}^a = \pi^* \kappa_{a-1}\]
and we are done. Now suppose some $b_i \neq 0$. Let $\iota: D_{i,d+1} \to \C^{d+1}$ be the inclusion. By the self-intersection formula (see Section \ref{si}), 
\[\iota^*D_{i,d+1} = \iota^*\iota_*1 = c_1(N_{D_{i,d+1}/\C^{d+1}}) = -\iota^*K_{d+1}.\]
Above, the last equality uses that $\iota$ is a section of $\alpha$, so the normal bundle to it is the restriction of the relative tangent bundle of $\alpha$, see the picture below.
\begin{equation} \label{normal}
\includegraphics[width=2in]{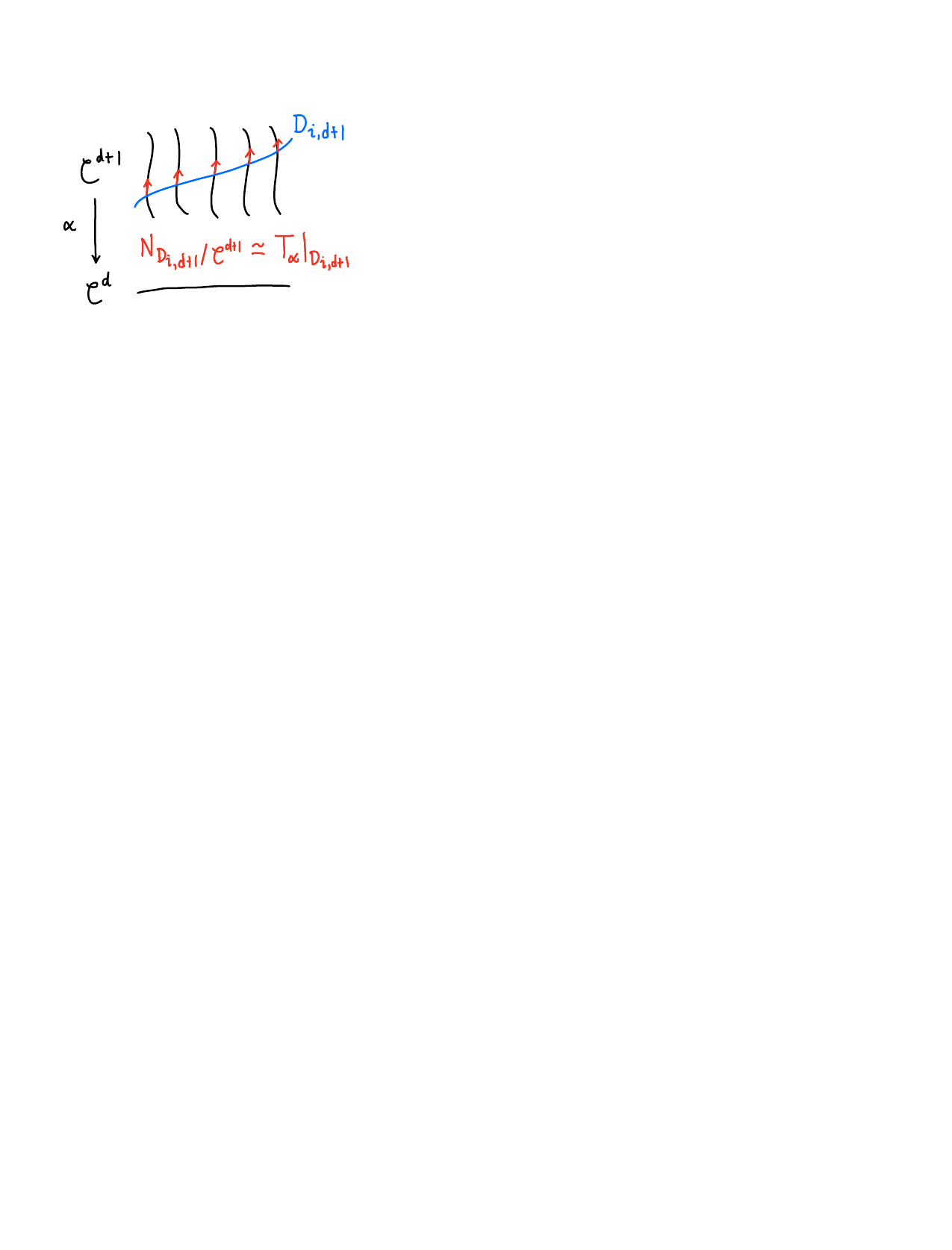}
\end{equation}
Repeated application of the push-pull formula now yields
\begin{align*}
D_{i,d+1}^{b_i} &= (\iota_*1)D_{i,d+1}^{b_i-1} = \iota_*(\iota^*D_{i,d+1}^{b_i-1}) = \iota_*(\iota^*(-K_{d+1})^{b_i - 1}) \\
&= (\iota_*1)(-K_{d+1})^{b_i-1} = D_{i,d+1}(-K_{d+1})^{b_i-1}.
\end{align*}

Making this substitution, it suffices to treat the case that each $b_i$ equals $1$ or $0$.
After relabeling, we may assume that the indices $i$ for which $b_i =1$ are the first $j$ indices. Thus, our task is to compute
$\alpha_*(K_{d+1}^a D_{1, d+1} \cdots D_{j, d+1})$.
Let $\iota: \C^d \cong D_{1,d+1} \to \C^{d+1}$ be the inclusion. Since $D_{1,d+1}$ is the locus where the $1$st and $d+1$st marking agree, we have
\[\iota^*D_{i,d+1} = D_{1,i} \qquad \text{and} \qquad \iota^* K_{d+1} = K_1.\]

Then, we compute
\begin{align*} \alpha_*(K_{d+1}^a D_{1, d+1} \cdots D_{j, d+1}) &= \alpha_*(K_{d+1}^a (\iota_* 1)D_{2, d+1} \cdots D_{j, d+1}) \\
&= \alpha_*(\iota_* \iota^*(K_{d+1}^aD_{2, d+1} \cdots D_{j, d+1})) \\
&= \alpha_*\iota_*(K_1^a D_{1,2} \cdots D_{1,j}) \\
&=K_1^a D_{1,2} \cdots D_{1,j}.
\end{align*}
For the last equality above, recall that $\alpha_*\iota_*$ is the identity because $\iota$ is a section.
\end{proof}

Repeated application of Lemma \ref{apush} shows that the pushforward along $\pi$ of any polynomial in the $K_i$ and $D_{ij}$ is a tautological class on $\M_g$. (At the last step, once one marking is left, there is only $K_1$ and pushforwards of polynomials in $K_1$ give the kappa classes by definition.) Equation \eqref{ciQ} gives a formula for $c_k(Q)$ as a polynomial in Chern classes of $\Omega_d$ and $\pi^* \lambda_j$. Moreover, the Chern classes of $\Omega_d$ are polynomials in the $K_i, D_{ij}$ and $\pi^*\kappa_i$. 
Taking the product of $c_k(Q)$ with any polynomial in the $K_i$ and $D_{ij}$ and pushing forward by $\pi$ thus yields a tautological class on $\M_g$. Such expressions are explicitly computable using the ideas in Lemma \ref{apush} and often yield non-trivial polynomials in the kappa classes.
On the one hand, the fact that $c_k(Q) = 0$ for $k > d - g$ means such a class is zero whenever $k > d - g$. This is how Faber produced many relations among tautological classes.

\subsubsection{Faber's conjecture} \label{con}

Based on the relations produced in low genus examples, Faber made the following conjecture concerning the structure of the tautological ring.

\begin{conj}[Faber \cite{F1}]
The tautological ring $\mathsf{R}^*(\M_g)$ satisfies the following properties:
\begin{enumerate}
\item It is generated by $\kappa_1, \ldots, \kappa_{\lfloor g/3 \rfloor}$, which satisfy no relations in degrees $\leq \lfloor g/3 \rfloor$;
\item It vanishes in degrees $> g - 2$ and $\mathsf{R}^{g-2}(\M_g) = \qq$;
\item There is a perfect pairing
\[\mathsf{R}^i(\M_g) \times \mathsf{R}^{g - 2 - i}(\M_g) \to \mathsf{R}^{g-2}(\M_g) = \qq.\]
\end{enumerate}
\end{conj}

Part (1) was proved by Ionel \cite{Ionel} and part (2) was proved by Looijenga \cite{Loo}.

\medskip
Based on further studying these relations, Faber and Zagier conjectured an explicit set of relations for all $g$, now called the \emph{Faber--Zagier relations}. These relations are specified in terms of a generating function involving hypergeometric series, logarithms, and exponentials. 
Years later, Pixton and Pandharipane proved that all of the Faber--Zagier relations are indeed relations \cite{PP}. Their construction of relations is also geometric in nature, but not quite as simple as the one above.
It still remains an open problem if Faber's original relations, produced by the recipe above, generate all Faber--Zagier relations. It also remains an open problem if the Faber--Zagier relations are all relations. We refer the reader to the excellent survey article \cite{calc} for a description of the Faber--Zagier relations and their history.

To further discuss (3), let us define $\mathsf{R}_{\mathsf{FZ}}(\M_g)$ to be the polynomial ring in kappa classes modulo the Faber--Zagier relations. Then there is a surjection $\mathsf{R}_{\mathsf{FZ}}^*(\M_g) \to \mathsf{R}^*(\M_g)$, which is an isomorphism if the Faber--Zagier relations are complete.
All proportionalities among the tautological classes in degree $g - 2$ are known, conjectured in \cite{F1} and proven in \cite{FP,GP}.
Thus, in theory one can compute the pairing
\begin{equation} \label{fzp}
\mathsf{R}_{\mathsf{FZ}}^i(\M_g) \times \mathsf{R}_{\mathsf{FZ}}^{g - 2 - i}(\M_g) \to \mathsf{R}_{\mathsf{FZ}}^{g-2}(\M_g) = \qq.
\end{equation}
If the pairing is perfect, then there cannot be any more relations. Indeed, a class cannot be zero if its product with some other class landing it in degree $g - 2$ is nonzero.
In this way, Faber computationally proved that $\mathsf{R}_{\mathsf{FZ}}^*(\M_g) = \mathsf{R}^*(\M_g)$ for $g \leq 23$ and that the pairing is perfect. However, with improved computing power, the case
$g = 24$ was eventually completed, and it showed that \eqref{fzp} is \emph{not} perfect. Thus, either part (3) is false or the Faber--Zagier relations are incomplete. The general belief these days seems to be that part (3) is false and the Faber--Zagier relations are complete.

Evidence in this direction comes from studying the analogous question on the compactification $\Mb_{g,n}$, parameterizing genus $g$ stable curves with $n$ marked points. The tautological ring of $\Mb_{g,n}$ (defined in Section \ref{bart}) satisfies
 $\mathsf{R}^{3g - 3+n}(\Mb_{g,n}) = \qq$, and the tautological ring vanishes in higher degrees as the moduli space has dimension $3g - 3 + n$. Thus it is natural to study the pairing 
\begin{equation} \label{bp} 
\mathsf{R}^{i}(\Mb_{g,n}) \times \mathsf{R}^{3g - 3+n-i}(\Mb_{g,n})  \to
\mathsf{R}^{3g - 3+n}(\Mb_{g,n}) = \qq.
\end{equation}
In \cite{PT}, Petersen and Tommasi show that the pairing fails to be perfect when $g = 2$ and $n$ is sufficiently large. Further investigation by Petersen \cite{P} shows that the pairing is not perfect when $g = 2$ and $n \geq 20$. This result was recently extended to higher genera by Canning \cite{C}.

\begin{thm}[Canning \cite{C}]
If $g \geq 2$ and $2g + n \geq 24$, then the pairing \eqref{bp} is not perfect.
\end{thm}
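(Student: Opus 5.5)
The plan is to argue by contradiction through the cycle class map together with Hodge theory, following Petersen--Tommasi and Petersen, and then to propagate a single non-perfect case to all $(g,n)$ with $2g+n\ge 24$ using the tautological gluing and forgetful maps.

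\textbf{Step 1: the mechanism.} Suppose the pairing \eqref{bp} on $\mathsf{R}^*(\Mb_{g,n})$ is perfect. The cycle class map sends $\mathsf{R}^*(\Mb_{g,n})$ into $H^{2*}(\Mb_{g,n},\qq)$, and Poincaré duality holds there. A perfect pairing on the tautological ring forces the cycle class map to be injective on $\mathsf{R}^*$. The image is then a subspace on which the Poincaré pairing is nondegenerate, so it has an orthogonal complement. The complement is spanned, in even degree, by non-tautological classes, and the tautological ring is a Gorenstein algebra.

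The aim is to exhibit a class $\alpha\in\mathsf{R}^i(\Mb_{g,n})$ that is nonzero in Chow but pairs to zero with all of $\mathsf{R}^{3g-3+n-i}$. Equivalently, one can find a tautological class that is zero in cohomology but nonzero in Chow. Either statement breaks perfectness.

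\textbf{Step 2: the base case $(g,n)=(1,11)$ or $(2,20)$ type seeds.} The key input is the odd cohomology class $H^{11}(\Mb_{1,11})$, which comes from the weight-$12$ cusp form $\Delta$, or its analogues. Via boundary gluing maps $\xi:\Mb_{1,11}\times\Mb_{1,11}\to\Mb_{2,20}$ (two genus-one components glued along two points, or a genus-one tail attached), one pushes forward $\omega\otimes\bar\omega$-type products of the odd classes. This produces even-degree classes in $H^{22}$ that are algebraic but orthogonal to tautological classes. By Hodge/weight considerations they are non-tautological.

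Petersen's argument then goes as follows. The diagonal class of $\Mb_{1,11}$ has a component in $H^{11}\otimes H^{11}$. Pushing the diagonal forward under gluing gives a tautological class. Its cohomology class has a non-tautological Künneth component, so its "tautological projection" cannot satisfy Gorenstein duality. Concretely, one computes in the Chow ring of $\Mb_{2,20}$ a tautological class that is nonzero but pairs trivially against $\mathsf{R}^*$. I would take this $(2,20)$ statement as the seed.

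\textbf{Step 3: propagation to $2g+n\ge 24$.} For general $(g,n)$, I would attach auxiliary pieces through gluing maps $\Mb_{g',n'+1}\times\Mb_{h,m+1}\to\Mb_{g,n}$, and also adding genus through self-gluing. This transports the bad class $\alpha$ on the seed space to $\xi_*(\alpha\times\beta)$ on $\Mb_{g,n}$, where $\beta$ is a point class or a suitable top-degree tautological class.

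Pushforwards and pullbacks under gluing maps preserve tautological rings. By push-pull, the pairing of $\xi_*(\alpha\times\beta)$ with any tautological class $\gamma$ equals $\int(\alpha\times\beta)\cdot\xi^*\gamma$. Since $\xi^*\gamma$ lies in $\mathsf{R}^*\otimes\mathsf{R}^*$ by the Künneth property of tautological pullbacks, this pairing vanishes. It remains to check that $\xi_*(\alpha\times\beta)\ne0$ in Chow, which I would detect by cohomology or by restricting to a stratum. Reducing genus-$g$ cases with $g\ge3$ to genus $2$ with enough markings is where the numerology $2g+n\ge24$ comes from: each unit of genus can be traded for two markings by a self-node.

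\textbf{Main obstacle.} I expect the hardest step to be proving that the transported class is nonzero in Chow rather than merely in cohomology, that is, handling the failure of the Künneth formula for Chow groups of $\Mb_{1,11}$. I would first try to realize the bad class as a difference of two tautological pushforwards whose cohomology classes differ exactly by the $H^{11}\otimes H^{11}$ part, and then compare their intersection numbers.
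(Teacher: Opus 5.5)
The paper does not prove this theorem. It is quoted from Canning, with the genus-two cases credited to Petersen--Tommasi and Petersen, so there is no internal argument to compare with. Judged on its own, your proposal has a genuine gap, at exactly the step you flagged.

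The push--pull computation in Step~3 is fine. By Lemma~\ref{pb}, $\xi_*(\alpha\times\beta)$, or $\xi_{\mathrm{irr}*}\alpha$ for the self-node map $\Mb_{g-1,n+2}\to\Mb_{g,n}$, lies in the radical of the pairing. But the whole content of the theorem is then that this class is nonzero, and nothing you set up gives that.

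The obstacle is nonvanishing, not Künneth failure for Chow groups; Lemma~\ref{pb} already supplies the vanishing you need. Intersection numbers with tautological classes cannot detect the class, since it lies in the radical by construction. You would have to pair it with explicit non-tautological cycles, or control $\xi^*\xi_*$ by excess intersection for an $\alpha$ you know explicitly. Because you treat the $(2,20)$ witness as a black box, you can do neither.

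Gluing pushforwards also have large kernels. For instance, $\xi_{\mathrm{irr}*}$ kills every class anti-invariant under swapping the two glued markings. So there is no reason a black-box radical element survives the ten self-gluings from $\Mb_{2,20}$ to $\Mb_{12}$, let alone the $g-2$ needed to reach $\Mb_g$.

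The marking direction can be salvaged, but by pullback rather than by attaching tails. Suppose $0\neq a\in\mathsf{R}^k(\Mb_{g,n})$ lies in the radical and $f\colon\Mb_{g,n+1}\to\Mb_{g,n}$ forgets a point. Then $f^*a$ lies in the radical because $\int f^*a\cdot b=\int a\cdot f_*b$. It is nonzero because $f_*(\psi_{n+1}\cdot f^*a)=(2g-2+n)\,a$. This reduces the theorem to $\Mb_{g,24-2g}$ for $2\le g\le 12$ and $\Mb_g$ for $g\ge 13$, which are exactly the cases only your genus-raising step was going to reach.

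In those cases the numerology has a meaning your proposal misses. The condition $2g+n\ge 24$ is precisely what is needed for $\Mb_{g,n}$ to have a boundary stratum with two genus-one vertices joined by $g-1$ edges, each vertex carrying at least $11$ half-edges. Equivalently, there is a gluing map $\Mb_{1,k_1}\times\Mb_{1,k_2}\to\Mb_{g,n}$ with $k_1,k_2\ge 11$, through which $\mathsf{H}^{11}\otimes\mathsf{H}^{11}$ is visible. For $\Mb_{12}$ this is the map $i$ of \eqref{glue1}. The natural repair is to run the genus-two mechanism directly on each such $\Mb_{g,n}$ through this stratum.

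That requires actually understanding the mechanism, and your Step~2 account of it is unreliable:
\begin{enumerate}
\item Gluing two genus-one curves at two points gives $\Mb_{3,18}$, not $\Mb_{2,20}$.
\item Pushforward from the divisor $\Mb_{1,11}\times\Mb_{1,11}\subset\Mb_{2,20}$ lands in $\mathsf{H}^{24}$, not $\mathsf{H}^{22}$.
\item Nothing you cite makes $\xi_*[\Delta]$ tautological, and $[\Delta]$ itself is not.
\end{enumerate}

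Two smaller corrections. First, $(1,11)$ cannot serve as a seed, since the pairing on $\mathsf{R}^*(\Mb_{1,n})$ is perfect for every $n$; this is consistent with the hypothesis $g\ge 2$. Second, a tautological class vanishing in cohomology is sufficient to break perfectness, but it is not equivalent to the failure of perfectness.
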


\subsection{The tautological generation question}
In recent years, there has also been great progress on the question of when $\ch^*(\M_g) = \mathsf{R}^*(\M_g)$. Through the combined efforts of many researchers, it remains open only in a small handful of genera.

The known cases where $\ch^*(\M_g) = \mathsf{R}^*(\M_g)$ are as follows.

\begin{thm}[Mumford \cite{mum}, Faber \cite{F2,F3}, Izadi \cite{Iz}, Penev--Vakil \cite{PV}, Canning--Larson \cite{789}] \label{t}
If $2 \leq g \leq 9$, $\ch^*(\M_g) = \mathsf{R}^*(\M_g)$.
\end{thm}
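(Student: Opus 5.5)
This statement is a compilation of results from several papers, so the plan is a uniform strategy applied genus by genus rather than one argument. The common engine is the same for every $g$: stratify $\M_g$ by a geometric invariant (hyperelliptic or not, gonality, Brill--Noether special or general), and use excision (Section \ref{sec:ex}) to reduce the statement to two things for each stratum $Z$. First, $\ch^*(Z)$ should be generated by restrictions of tautological classes, or by classes whose pushforwards are tautological. Second, the pushforward of each such class to $\M_g$ should lie in $\mathsf{R}^*(\M_g)$. With rational coefficients (Section \ref{comment}) we may work with coarse spaces or quotient stacks interchangeably, so each stratum can be presented as a quotient $[X/G]$, with $X$ an open subset of a representation of a linear algebraic group $G$. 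The Chow ring of such a quotient is then a quotient of $\ch^*(\mathrm{B}G)$, computed as in the discussion of $\BGL_k$.

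In low genus the steps run as follows.

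\begin{enumerate}
\item For $g=2$, every curve is a double cover of $\pp^1$ branched at $6$ points. This gives $\M_2$ as a quotient of an open subset of binary sextics by $\GL_2$. Hence $\ch^*(\M_2)$ is generated by the universal Chern classes $c_1, c_2$ (Mumford's argument). One then checks, via the Hodge bundle and Grothendieck--Riemann--Roch \eqref{grr}, that these are expressible in $\lambda_1=\kappa_1/12$.
\item For $g=3$, split $\M_3$ into non-hyperelliptic curves (plane quartics, i.e.\ an open subset of quartic forms modulo $\GL_3$) and the hyperelliptic locus (binary octics modulo $\GL_2$). The Chow ring of the quartic part is generated by Chern classes of the rank-$3$ bundle $\mathbb{E}^\vee$, hence by lambda classes. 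It remains to show that the fundamental class of the hyperelliptic locus and the pushforwards of classes on it are tautological, for instance by a degeneracy-locus computation for $\mathrm{Sym}^2\mathbb{E}\to \pi_*\omega^2$.
\item For $4\le g\le 6$, use canonical models cut out by quadrics and cubics (Faber, Izadi), or the trigonal construction and Petri maps (Penev--Vakil for $g=6$).
\item For $7\le g\le 9$, follow Canning--Larson: stratify by gonality. Realize $k$-gonal curves for $k\le5$ via Casnati--Ekedahl structure theorems as vanishing loci in Hirzebruch surfaces or $\pp^1$-bundles, together with their universal Tschirnhausen bundles. Chern classes of these bundles on the Hurwitz stack give generators, and one shows that their pushforwards to $\M_g$ are tautological in low degree. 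For Brill--Noether general curves, use Mukai models as linear sections of homogeneous varieties, where the Chow ring is generated by Chern classes of the Mukai bundle, which one must relate to $\kappa$ classes.
\end{enumerate}

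The main obstacle is the second half of each excision step: showing that every class pushed forward from the special strata is actually tautological on $\M_g$, and not just that each stratum's own Chow ring is tautologically generated. For this I would first try to write each stratum's closure as a degeneracy locus of a map between tautological bundles. Porteous' formula then makes its class, and classes supported on it, into polynomials in $\lambda$'s and $\kappa$'s. In genera $7$--$9$ the higher-gonality strata require vanishing results: the Hurwitz-space classes must vanish, or be tautological, in degrees where one cannot compute directly. I expect this to be the hardest step.
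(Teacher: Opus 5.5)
Your overall plan matches the paper's. The paper also proves only $g=3$ and cites the rest. It does so via $\M_3=\P\sqcup\H$, the surjection $\ch^*(\BGL_3)\twoheadrightarrow\ch^*(\P)$ sending the $c_i$ to Chern classes of (a twist of) $\mathbb{E}$, and a degeneracy-locus argument for $[\H]$.

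Your degeneracy locus is different, and it works. The multiplication map $\mathrm{Sym}^2\mathbb{E}\to f_*\omega_f^{\otimes 2}$ is a map of rank-$6$ bundles on $\M_3$. It is an isomorphism exactly off $\H$: a plane quartic lies on no conic, while for hyperelliptic $C$ the image is $5$-dimensional. Since $\H$ is irreducible, the determinant cuts out a positive multiple of $[\H]$. Its class is $c_1(f_*\omega_f^{\otimes 2})-c_1(\mathrm{Sym}^2\mathbb{E})=(\lambda_1+\kappa_1)-4\lambda_1=9\lambda_1$ by GRR. This is quicker than the paper's route, which applies Porteous to $\pi^*\mathbb{E}\to\Omega_2$ on $\C^2$ and then cuts the $1$-dimensional fibers of $Z\to\H$ with $D_{1,2}$ to get $8[\H]$. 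However, your version relies on $\H$ being a divisor cut out by a map of equal-rank bundles. The paper's version is the one that extends to Brill--Noether loci of higher codimension, which is what higher genus requires.

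What is missing is control of $\ch^*(\H)$ in positive degree. You rightly note that pushforwards of all classes on $\H$ must be tautological, but the only tool you offer produces just $[\H]$. More generally, your closing claim that Porteous makes ``classes supported on'' a stratum tautological is too strong. Porteous gives $[Z]$, and push--pull then gives $[Z]\cdot x$ for $x\in\mathsf{R}^*(\M_g)$. It says nothing about classes on $Z$ that are not restricted from $\M_g$.

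The paper fills this gap by proving $\ch^*(\H)=\qq$. The coarse space of $\H$ is $\M_{0,8}/S_8$. Pullback to $\M_{0,8}$ is injective with rational coefficients, since pushforward after pullback is multiplication by $8!$. Finally, $\M_{0,8}$ is open in $\aa^5$. Hence $\ch^{i-1}(\H)$ vanishes for $i\neq 1$, and in the excision sequence only $[\H]$ matters. The same argument with $\M_{0,6}/S_6$ gives $\ch^*(\M_2)=\qq$ outright, so your Chern-class step for $g=2$ is more than needed. For $4\le g\le 9$ your sketches are at the level of the citations, which is all the paper offers as well.
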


\noindent
Furthermore, in the above cases, the tautological ring is also known by Faber's computer calculations, so the Chow ring is completely understood. 

The moduli space of genus $g$ curves becomes increasingly complicated as $g$ grows, and each genus above requires new ideas to understand curves of that genus. While many parts of the arguments are specific to a particular genus, there are also similar themes throughout. Some of these themes will be highlighted in our exposition on genus $3$ in the next section.

On the other hand, there are known non-tautological classes in most higher genera.

\begin{thm}[Van Zelm \cite{VZ}, Arena et. al. \cite{cc}] \label{nt}
If $g = 12$ or $g \geq 16$, $\ch^*(\M_g) \neq \mathsf{R}^*(\M_g)$.
\end{thm}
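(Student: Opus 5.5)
The plan is to show that $\ch^*(\M_g)$ contains a class that cannot lie in $\mathsf{R}^*(\M_g)$, and to detect this through a \emph{cycle class map} to cohomology compatible with the Hodge structure. First I would compare with the compactification. The tautological ring of $\Mb_{g,n}$ is spanned by decorated boundary strata classes. These are algebraic cycles, and their cohomology classes land in the even cohomology. More precisely, they lie in the part spanned by Tate-type Hodge classes built from $\psi$ and $\kappa$ classes on the factors. Restricting to $\M_g$ sends $\mathsf{R}^*(\Mb_g)$ onto $\mathsf{R}^*(\M_g)$. So it suffices to build an algebraic cycle $Z\subset\Mb_g$ whose restriction to $\M_g$ is not tautological.

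\textbf{Step 1: the case $g=12$ (van Zelm's approach).} I would take the locus of bielliptic curves, the curves $C$ admitting a double cover $C\to E$ of an elliptic curve. This locus is the image of a finite map from a space of admissible double covers. I would consider its closure $\overline{B}_{12}\subset\Mb_{12}$ and the fundamental class $[\overline{B}_{12}]\in\ch^{*}(\Mb_{12})$. Next I would pull this class back along a gluing map, say
\[\xi\colon \Mb_{1,11}\times\Mb_{11,1}\to\Mb_{12}.\]
Pullbacks of tautological classes along gluing maps are tautological, meaning they are sums of products of tautological classes on the factors. So the pullback of any tautological class has a Künneth decomposition involving only tautological, hence even and Tate, cohomology of $\Mb_{1,11}$.

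The pullback of $[\overline{B}_{12}]$ instead contains a component coming from the locus of pairs $(E,p_1,\dots,p_{11})$ with $p_i$ in bielliptic-compatible position. That component has a nonzero Künneth piece in $H^{11,0}(\Mb_{1,11})\otimes H^{0,11}$. Such a piece exists because of the weight-$12$ cusp form $\Delta$, which gives odd cohomology $H^{11}(\Mb_{1,11})\neq 0$. This contradiction shows $[\overline{B}_{12}]$ is non-tautological on $\Mb_{12}$.

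\textbf{Step 2: descending to the open part $\M_{12}$.} I would show that the restriction to $\M_{12}$ remains non-tautological. Suppose instead that $[B_{12}]$ equals a polynomial $P$ in the $\kappa$ classes. Then by excision (Section~\ref{sec:ex}), $[\overline{B}_{12}]-P$ is pushed forward from the boundary. I would then check that the pullback under $\xi$ of every boundary-supported class has no $\Delta$-isotypic piece in the relevant degree. This uses that boundary strata are images of products of smaller moduli spaces, and that the relevant odd cohomology does not appear in those lower-dimensional pieces in this degree.

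\textbf{Step 3: the range $g\ge16$.} For these genera I would vary the construction. I would use loci of curves admitting double covers of genus-$1$ or genus-$h$ curves, or more general Hurwitz loci. The parameters would be chosen so that the pullback along a suitable gluing map again involves $H^{11}(\Mb_{1,11})$, or other odd or non-Tate classes. Then I would run the same argument as in Steps 1 and 2.

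I expect the main obstacle to be the nonvanishing of the odd Künneth component in Step 1. This requires an explicit description of the fiber product of the Hurwitz space with the boundary stratum. It also requires an argument that the contributions do not cancel, for instance by identifying a single component whose projection to $\Mb_{1,11}$ has a class pairing nontrivially with $\Delta$. Step 2, the descent from $\Mb_{12}$ to $\M_{12}$, is the second delicate point: one must control all boundary pushforwards.
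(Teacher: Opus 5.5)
\textbf{There is a genuine gap in Step 1, at the point you flagged as the main obstacle.}

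First, $\xi\colon \Mb_{1,11}\times\Mb_{11,1}\to\Mb_{12}$ is not a gluing map, because the half-edges do not match. Gluing one pair of points lands in $\Mb_{12,10}$. Using all twelve half-edges forces five self-loops at the genus-$1$ vertex and lands in $\Mb_{17}$.

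No repair that keeps a factor like $\Mb_{11,1}$ can work either. The class $[\overline{\B}_{12}]$ has cohomological degree $22$. The only odd cohomology of $\Mb_{1,11}$ is $\mathsf{H}^{11}$: odd cohomology vanishes in degrees $\le 9$, hence by Poincar\'e duality also in degrees $\ge 13$. So any odd K\"unneth component must lie in $\mathsf{H}^{11}(\Mb_{1,11})\otimes \mathsf{H}^{11}(\text{other factor})$. But $\mathsf{H}^{11}(\Mb_{g,n})=0$ unless $g=1$ and $n\ge 11$, so $\mathsf{H}^{11}(\Mb_{11,1})=0$ and there is nothing to detect. Within your strategy the source must therefore be $\Mb_{1,11}\times\Mb_{1,11}$, for instance the map $i$ gluing the eleven pairs of markings, which is what the paper uses.

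Second, you give no mechanism for why the pullback has a nonzero $\mathsf{H}^{11,0}\otimes\mathsf{H}^{0,11}$ piece. The ``locus of pairs with $p_i$ in bielliptic-compatible position'' is never identified, and you expect a detailed fiber-product analysis to be needed. The missing idea is simple:
\begin{itemize}
\item Gluing an $11$-pointed elliptic curve to a copy of itself gives a curve with an involution exchanging the two halves. Hence $i^{-1}(\overline{\B}_{12})$ contains the diagonal $\Delta$.
\item By Van Zelm's lemma, the rest of the preimage lies over the boundary, so $i^*[\overline{\B}_{12}]=\alpha[\Delta]+j_*z$ with $\alpha\neq 0$.
\item Nonvanishing is then automatic. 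The class $[\Delta]=\sum_i(-1)^{\deg e_i}e_i\otimes\hat e_i$ has a nonzero component of type $\mathsf{H}^{11,0}\otimes\mathsf{H}^{0,11}$ because $\mathsf{H}^{11,0}(\Mb_{1,11})\neq 0$. The term $j_*z$ is pushed forward from the boundary, so it has no such component.
\end{itemize}

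In Step 2 your overall structure matches the paper's. However, your stated reason (odd cohomology not appearing in lower-dimensional pieces) does not handle the irreducible boundary divisor $\xi_{\mathrm{irr}}\colon\Mb_{11,2}\to\Mb_{12}$, whose image contains the image of $i$. There the pullback is not supported on a smaller locus. Instead one factors $i=\xi_{\mathrm{irr}}\circ i'$ and uses excess intersection:
\[
i^*\xi_{\mathrm{irr}*}Z=i'^*\bigl((-\psi_1-\psi_2)Z\bigr).
\]
The $\psi$ classes pull back to $\psi_{11}\otimes 1$ or $1\otimes\psi_{11}$, which have Hodge type $(1,1)$ and so kill the $(11,0)\otimes(0,11)$ component. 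Only for the separating divisors $\xi_a$ is the pullback supported on the boundary of $\Mb_{1,11}\times\Mb_{1,11}$.

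Step 3 is only a pointer rather than an argument, but the paper likewise just cites Arena et al.\ for $g\ge 16$.
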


In the cases above, the authors construct an explicit non-tautological cycle. This cycle is very natural --- a fundamental class of locus of curves which are double covers of curves of some smaller fixed genus. We'll explain the case of genus $12$ in the last section.

\section{Tautological generation in genus $3$} \label{s3}
In order to give a sense of the machinery that goes into proving Theorem \ref{t},  we give a proof of the fact that $\ch^*(\M_3) = \mathsf{R}^*(\M_3)$, which was originally established by Faber \cite{F2}. 
While this case is relatively simple, many of the ingredients that have been used in higher genus are already visible and we will highlight these key concepts.
We stress that the method here relies on rational coefficients. It is still an open problem to compute the Chow ring of $\M_3$ with integral coefficients.

The first step is to stratify the moduli space according to different ways we know how to model our curve via an explicit map to projective space. In genus $3$, every curve is hyperelliptic or a smooth plane quartic
\[\M_3 = \{\text{smooth plane quartics}\} \sqcup \{\text{hyperelliptic curves}\} = \P \sqcup \H.\]
Our strategy is to separately obtain generators for the Chow rings of $\P$ and $\H$, and then assemble our information with the excision sequence
\begin{equation} \label{exe} \ch^{i-1}(\H) \to \ch^{i}(\M_3) \to \ch^i(\P) \to 0
\end{equation}
in order to obtain generators for $ \ch^{*}(\M_3)$. 

In higher genus, it has been fruitful to stratify $\M_g$ by \emph{gonality} (the minimal degree of a map $C \to \pp^1$) but sometimes also distinguish other special Brill--Noether loci (loci of curves admitting other specified maps to projective spaces) to deal with separately. 
The decomposition above is the gonality stratification for $\M_3$.
In higher genus, the strategy is similar: we obtain generators for the Chow ring of each piece of a stratification and study their contributions to the Chow ring of $\M_g$ via the excision sequence. A key step is showing that fundamental classes of strata are tautological.

\subsection{The hyperelliptic locus} \label{hyp}
By the Riemann--Hurwitz formula, a hyperelliptic genus $3$ curve $C \to \pp^1$ is a branched over $8$ distinct points. Conversely, any $8$ distinct points on $\pp^1$ determine a hyperelliptic genus $3$ curve. Two configurations of $8$ points determine the same hyperelliptic curve if and only if they are related by an automorphism of $\pp^1$.
It follows that the coarse moduli space of $\H$ is
\[\H \leftrightarrow \M_{0,8}/S_8 = \{\text{$8$ unordered points on $\pp^1$}\}/\PGL_2.\]
We emphasize that $\leftrightarrow$ above is not an equivalence of moduli \emph{stacks}: a generic element of $\H$ has a $\zz/2$-stabilizer, while the generic element of $\M_{0,8}/S_8$ has no stabilizers. However, since they have the same coarse moduli space, $\H$ and $\M_{0,8}/S_8$ have the same rational Chow ring (see Section \ref{comment}).
To determine the Chow ring of $\M_{0,8}/S_8$, consider the pullback along the quotient map
\begin{equation} \label{inj} \ch^*(\M_{0,8}/S_8) \to \ch^*(\M_{0,8})
\end{equation}
The composition of this pullback with the pushforward along the quotient map is multiplication by the degree, which is $|S_8| = 8!$. Hence, it is injective with rational coefficients.

Meanwhile, $\M_{0,8}$ is an open subset of $\aa^5$. Indeed,
when the points are ordered, using automorphisms of $\pp^1$, we can take the first three points to $0, 1, \infty$. The remaining five points lie in the complement of the diagonals in $(\pp^1 \smallsetminus \{0, 1, \infty\})^5 \subset (\aa^1)^5 = \aa^5$. Since $\ch^*(\aa^5) = \qq$, excision implies $\ch^*(\M_{0,8}) = \qq$. In conclusion, as \eqref{inj} is injective, we have
\[\ch^*(\H) = \qq.\]
That is, $\H$ has only its fundamental class in degree $0$ and no other non-trivial classes.

We remark that the \emph{integral} Chow ring of the hyperelliptic locus is computed in \cite{D,EF}, and is non-trivial (it has lots of $2$-torsion, like the Chow ring of $\mathrm{B}(\zz/2)$).

\subsection{The plane quartics}
All non-hyperelliptic genus $3$ curves are realized by their canonical embedding as smooth plane quartics, and conversely every smooth plane quartic is a canonically embedded genus $3$ curve. Let $U \subset H^0(\pp^2, \O_{\pp^2}(4))$ be the open subset of homogeneous degree $4$ polynomials whose vanishing locus in $\pp^2$ is smooth. There is a natural map $U \to \P$ induced by the universal plane quartic over $U$. Equations in the same $\GL_3$ orbit yield isomorphic curves, so we obtain 
\[U/\GL_3 \to \P.\]
It's not hard to see that this map induces an isomorphism of coarse moduli spaces.
Indeed, every isomorphism of non-hyperelliptic genus $3$ curves induces an isomorphism of their canonical models and vice versa, so the $\GL_3$ orbits above are seen to be precisely the isomorphism classes of curves in $\P$.
 \footnote{If we specify that
$\GL_3$ acts by change of coordinates \emph{and scaling by the inverse of the determinant} then this map is in fact an equivalence of stacks and this presentation has been used to compute the integral Chow ring of $\P$ \cite{DFV}. Finding the relations induced by the complement of $U/\GL_3$ inside $H^0(\pp^2, \O_{\pp^2}(4))/\GL_3$ is an example of an ``excision problem," as alluded to in Section \ref{sec:ex}.}

This description as a quotient stack provides us with generators for the Chow ring of $\P$. Indeed we have a diagram like so
\begin{center}
\begin{tikzcd}
\P &   \arrow{l} U/\GL_3 \arrow{r}{\text{open}} & \mathsf{H}^0(\pp^2,\O_{\pp^2}(4))/\GL_3 \arrow{d}{\text{vector bundle}} \\
& & \BGL_3
\end{tikzcd}
\end{center}
We know that pullback along a vector bundle morphism induces an isomorphism on Chow rings and pullback along an open inclusion induces a surjection on Chow rings, so we obtain
\[\qq[c_1,c_2,c_3] = \ch^*(\BGL_3) \cong \ch^*(H^0(\pp^2,\O_{\pp^2}(4))/\GL_3) \twoheadrightarrow \ch^*(\P). \]
What are the pullbacks of $c_1, c_2, c_3$ along this map? By definition, a map $\P \to \BGL_3$ corresponds to a rank $3$ vector bundle on $\P$, and pullbacks of the $c_i$ are the Chern classes of this vector bundle. The vector bundle in question is one whose fiber at each point $C$ is $H^0(K_C)$. In other words, it is the restriction of the Hodge bundle $\mathbb{E}$.\footnote{Or if one wants the $\GL_3$ action to also scale by the inverse of the determinant, the associated vector bundle is $\mathbb{E} \otimes \det \mathbb{E}^\vee$. Either way, its Chern classes are tautological.} Thus, the map above sends $c_i$ to $\lambda_i$. In particular, $\ch^*(\P)$ is generated by the restrictions of tautological classes.

\subsection{Putting the pieces together}
By Section \ref{hyp}, the left-hand side of \eqref{exe} vanishes unless $i = 1$. 
For $i \neq 1$, the fact that $\ch^i(\P)$ is generated by tautological classes implies that $\ch^i(\M_3)$ is too. When $i = 1$, the excision sequence ensures that $\ch^1(\M_3)$ modulo the span of the fundamental class $[\H]$ 
is generated by tautological classes. Thus, to conclude it suffices to show that $[\H]$ is tautological.

\subsubsection{The fundamental class of the hyperelliptic locus}
Determining fundamental classes of subvarieties of $\M_g$ can be challenging in general. However, when a locus can be realized as occurring in the ``expected codimension," we can use degeneracy formulas and they typically show that the class is tautological. We explain this strategy in the case of $\H \subset \M_3$. Similar ideas show that Brill--Noether loci of the expected codimension have tautological fundamental classes, see \cite{F1}.

What distinguishes hyperelliptic curves? They are the curves that have two points $p_1, p_2$ such that $h^0(\O_C(p_1 + p_2)) = 2$. Equivalently, by Riemann--Roch, there exists a pair of points which fail to impose independent conditions on the canonical: $h^0(\omega_C(-p_1 - p_2)) = g-1 = 2$.
Recycling notation from Section \ref{tr}, let $\pi: \C^2 \to \M_3$ be the fiber product of two copies of the universal curve and consider the vector bundle $\Omega_2$ whose fiber over $[C, p_1, p_2]$ is $H^0(\omega_C|_{p_1 + p_2})$. As before, the evaluation map
\[H^0(\omega_C) \to H^0(\omega_C|_{p_1 + p_2})\]
globalizes to a map of vector bundles
\[\phi: \pi^* \mathbb{E} \to \Omega_2.\]

In the fiber over most points $[C, p_1, p_2]$, this map from a rank $3$ vector bundle to a rank $2$ vector bundle has a $1$-dimensional kernel, but it has a $2$-dimensional kernel precisely when $h^0(\omega_C(-p_1 - p_2)) = 2$.
That is, 
\[Z := \{[C, p_1, p_2] : h^0(\O_C(p_1 + p_2)) = 2\} \subset \C^2\]
is the locus where $\phi$ drops rank. The expected codimension of this degeneracy locus is $(\dim \ker)(\dim \coker) = 2\cdot 1$. 
We claim that $Z$ is in fact codimension $2$ in $\C^2$.
Indeed, $\H$ is codimension $1$ in $\M_3$ and the fibers of $Z \to \H$ have dimension $1$ (all choices of pairs of points in the same fiber of the hyperelliptic map). Thus, 
\[\dim Z = \dim \H + 1 = \dim \M_3 - 1 + 1 = \dim \C^2 - 2.\]
Since $Z$ has the expected codimension, its class is given by the \emph{Porteous formula}. This is a formula for the fundamental class of a degeneracy locus in terms of the Chern classes of the source and target vector bundles, see \cite[Chapter 12]{3264}.
In particular,
$[Z]$ is a polynomial in $K_1, K_2, D_{1,2}, \pi^*\kappa_j, \pi^*\lambda_i$.

Now, $Z \to \H$ has $1$-dimensional fibers, so if we push forward $[Z]$ along $\pi$, we will just get zero. However, if we intersect with a class on $\C^2$ before pushing forward, we may cut down these fibers and hope to obtain a multiple of $[\H]$. For example, if we intersect $Z$ with $D_{1,2}$, this is the locus of $[C, p, p]$ such that $h^0(\O_C(2p)) = 2$, i.e. $p$ is a branch point of the hyperelliptic map. There are $8$ branch points, so $Z \cap D_{1,2} \to \H$ is finite of degree $8$. Thus,
\begin{align*} 
8[\H] &= \pi_*[Z \cap D_{1,2}] = \pi_*([Z] \cdot D_{1,2})\\
&= \pi_*(\text{polynomial in  $K_1, K_2, D_{1,2}, \pi^*\kappa_j, \pi^*\lambda_i$}) \in \mathsf{R}^1(\M_3),
\end{align*}
showing that $[\H]$ is tautological as desired.

\section{A non-tautological class in genus $12$}
In the last section, we explain the construction of a non-tautological class in the Chow ring of $\M_{12}$.
Although we just treat genus $12$, the non-tautological classes discovered in higher genus are obtained by similar ideas. This technique for finding non-tautological classes was initiated in \cite{GP}, built upon in \cite{VZ} (which includes the case of genus $12$ presented here), and generalized further in \cite{cc}. The argument uses properties of the tautological ring of the compactificaiton $\Mb_{g,n}$, parameterizing stable genus $g$ curves with $n$ marked points.  We refer the reader to \cite[Chapter X]{ACG} for definitions and background on stable curves.

\subsection{The tautological ring of $\Mb_{g,n}$} \label{bart}
Like the Grassmannian in Section \ref{gkn}, the moduli space of pointed stable curves $\Mb_{g,n}$ admits a stratification whose strata are labeled by nice combinatorial objects called \emph{stable graphs}. 
In this graph, each component corresponds to a vertex, labeled by the genus of its normalization; each node corresponds to an edge; and each marked point is represented by a half-edge. For example the curve on the left below has dual graph pictured on the right.
\begin{equation}
\includegraphics[width=3in]{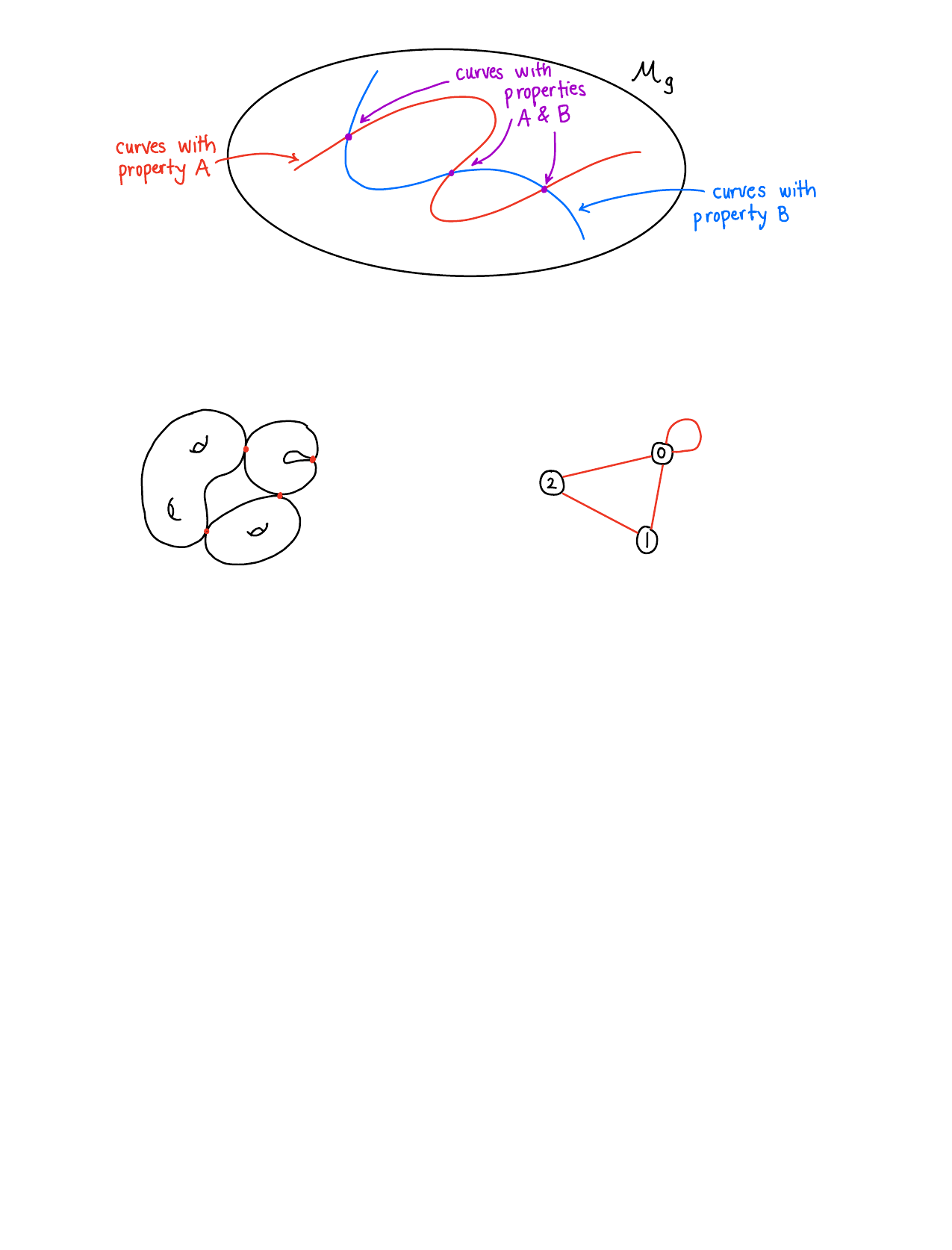}
\end{equation}
The stability condition is the requirement that every genus $0$ vertex has at least three half edges coming out of it and every genus $1$ vertex has at least one half edge. To see the genus of a nodal curve, imagine smoothing out the nodes. For example, the curve pictured above has genus $5$.
The closure of the stratum associated to a dual graph $\Gamma$ is the union of locally closed strata labeled by graphs $\Gamma'$ which admit an edge contraction $\Gamma' \to \Gamma$.
This closed stratum is the
image of a gluing map
\[\xi_{\Gamma} \colon \prod_{v \in \Gamma} \Mb_{g_v,n_v} \to \Mb_{g,n}.\]
Of particular interest for us in genus $12$ is the gluing map
\begin{equation} \label{glue1} i\colon \Mb_{1,11} \times \Mb_{1,11} \to \Mb_{12}
\end{equation}
whose image is nodal curves as in the picture below (and their specializations):
\begin{equation} \label{glue2}
\includegraphics[width=3in]{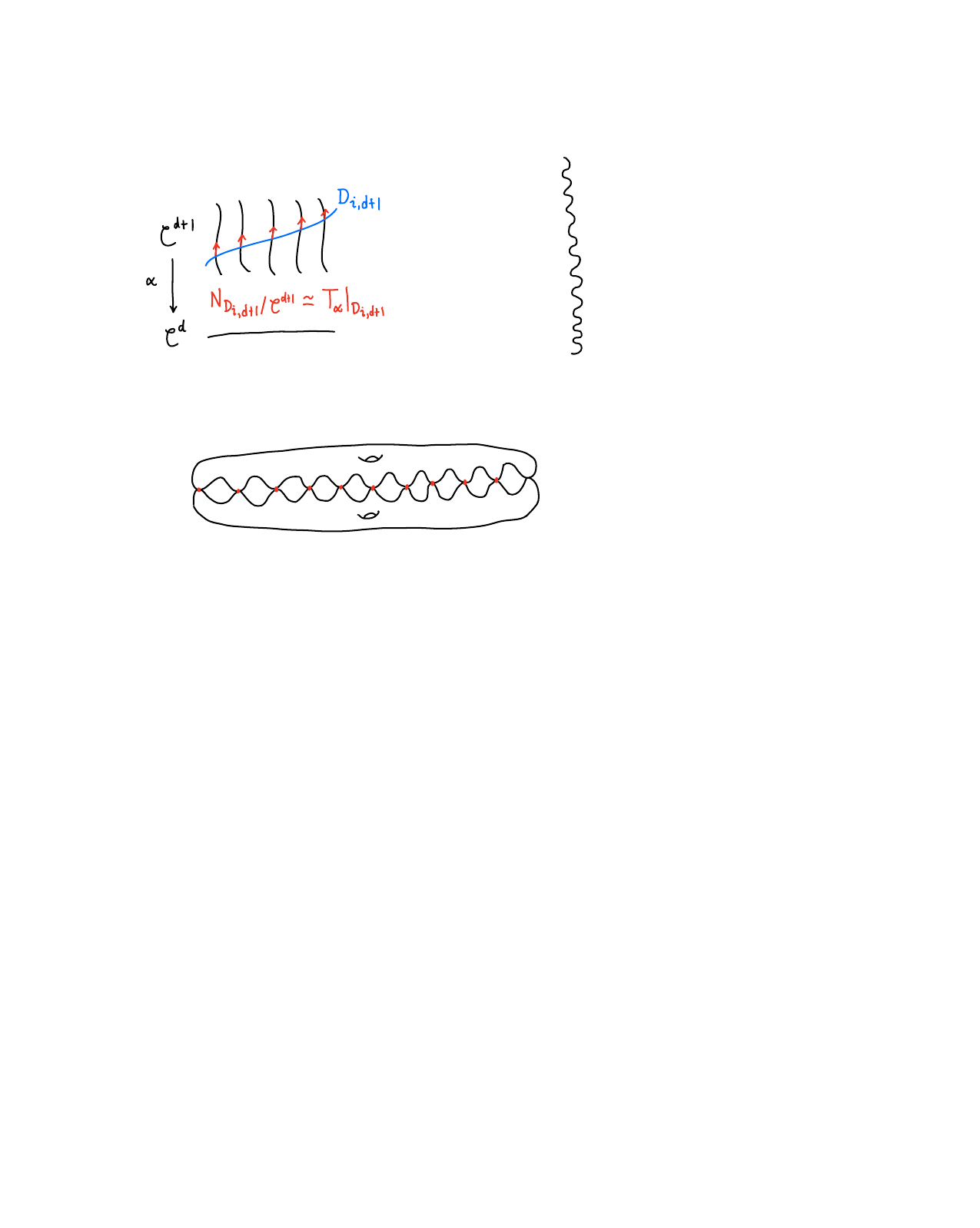}
\end{equation}

The other natural maps between moduli spaces of stable curves are the forgetful maps $f \colon \Mb_{g,n+1} \to \Mb_{g,n}$, defined by forgetting the last marking and stabilizing (a process that contracts any genus $0$ components with fewer than $3$ half-edges). In fact, $f \colon \Mb_{g,n+1} \to \Mb_{g,n}$ is the universal curve over $\Mb_{g,n}$ and comes equipped with $n$ disjoint sections 
\[\sigma_1, \ldots, \sigma_n\colon \Mb_{g,n} \to \Mb_{g,n+1}\]
corresponding to the $n$ markings.

\begin{definition} \label{bar-taut}
The tautological rings $\mathsf{R}^*(\Mb_{g,n}) \subseteq \ch^*(\Mb_{g,n})$ are the smallest system of subrings containing $1 \in \ch^0(\Mb_{g,n})$ and closed under pushforward along the gluing and forgetful maps. 
\end{definition}

From the above definition, it is not obvious how to find generators for the tautological ring $\mathsf{R}^*(\Mb_{g,n})$, or even that this ring is finitely generated. However, there is a nice list of generators for the tautological ring called \emph{decorated boundary strata}.
To define them, we first define
\[\psi_i = c_1(\sigma_i^* \omega_f) \in \ch^1(\Mb_{g,n}) \qquad \text{and} \qquad \kappa_j = f_*(\psi_{n+1}^{j+1}) \in \ch^j(\Mb_{g,n}).\]
To see that the classes $\psi_i$ (and hence also $\kappa_j$) are tautological according to Definition \ref{bar-taut}, we realize the section $\sigma_i$ as a gluing map
\[\sigma_i \colon \Mb_{g,n} = \Mb_{g,\{1,\ldots, \hat{i}, \ldots ,n\}} \times \Mb_{0,\{p',i,n+1\}} \to \Mb_{g,n+1}\]
which glues $p$ to $p'$. By Definition \ref{bar-taut}, the class $f_*(\sigma_{i*}1)^2$ must be tautological. On the other hand, using the self-intersection formula (see Section \ref{si}), we have
\[(\sigma_{i*}1)^2 = \sigma_{i*}(c_1(N_{\sigma_i(\Mb_{g,n})/\Mb_{g,n+1})}) = \sigma_{i*}(c_1(\sigma_i^*\omega_f^\vee)) = \sigma_{i*}(-\psi_i).\]
The second equality above follows from identifying the normal bundle to the section with the restriction of the relative tangent bundle to the section (similarly to as in \eqref{normal}). Finally, since $\sigma_i$ is a section of $f$, we have
\[ \mathsf{R}^1(\Mb_{g,n}) \ni f_*(\sigma_{i*}1)^2 = f_*\sigma_{i*}(-\psi_i) = -\psi_i,\]
showing that $\psi_i$ is tautological.

Now, given any gluing map 
$\xi_{\Gamma}\colon \prod_{v \in \Gamma} \Mb_{g_v,n_v} \to \Mb_{g,n}$, classes of the following form are tautological by Definition \ref{bar-taut}:
\begin{equation} \label{decst} \xi_{\Gamma*
}\left( \prod_{v \in \Gamma} \mathrm{pr}_v^*(\text{monomial in $\psi_i$ and $\kappa_j$})\right).
\end{equation}
A class of the form \eqref{decst} is called a \emph{decorated boundary stratum}. This is because $\xi_{\Gamma*}(1)$ is the fundamental class of a boundary stratum. We also need to allow $\kappa$ and $\psi$ ``decorations" in order to get a system of classes closed under multiplication. The graph $\Gamma$ is allowed to be the trivial graph with one vertex of genus $g$, which allows one to obtain the usual $\psi$ and $\kappa$ classes. In particular, there is a restriction map $\mathsf{R}^*(\Mb_{g,n}) \to \mathsf{R}^*(\M_{g,n})$ which is surjective.

Using excess intersection, one can give a formula for the product of two decorated boundary strata as a linear combination of decorated boundary strata \cite[Appendix A]{GP}.
 One should think of 
 decorated strata as analogues of Schubert cycles, and this rule for multiplication as the 
analogue of Schubert calculus (see Section \ref{gkn}).

Going further, \cite[Appendix A]{GP} actually determines a formula for the pullback of decorated boundary strata under pullbacks along gluing maps. The key fact for us is that these pullbacks of tautological classes are tautological in the following sense.

\begin{lem}[Appendix A of \cite{GP}] \label{pb}
Suppose $\xi_\Gamma \colon \prod_{v \in \Gamma} \Mb_{g_v,n_v} \to \Mb_{g,n}$  is a gluing map and $v \in \mathsf{R}^*(\Mb_{g,n})$. Then, $\xi_\Gamma^*z$ lies in the image of $\bigotimes_{v \in \Gamma} R^*(\Mb_{g_v,n_v}) \to \mathsf{A}^*(\prod_{v \in \Gamma} \Mb_{g_v,n_v})$. In particular, the K\"unneth components of the image of $\xi_\Gamma^*v$ under the cycle class map are tautological.
\end{lem}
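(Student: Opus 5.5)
Since $\mathsf{R}^*(\Mb_{g,n})$ is additively spanned by decorated boundary strata \eqref{decst}, and $\xi_\Gamma^*$ is linear, it suffices to take $z = \xi_{A*}(\alpha)$ for a stable graph $A$ and a product $\alpha$ of monomials in $\psi$ and $\kappa$ classes on its vertex moduli spaces, and to show that $\xi_\Gamma^* z$ is a sum of products, over the vertices $v \in \Gamma$, of tautological classes on the factors $\Mb_{g_v,n_v}$. The tool is the excess intersection formula (the general form of the self-intersection formula in Section \ref{si}) applied to the fiber square of $\xi_\Gamma$ and $\xi_A$.

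First, describe the fiber product $\left(\prod_{v\in\Gamma}\Mb_{g_v,n_v}\right)\times_{\Mb_{g,n}}\left(\prod_{w \in A}\Mb_{g_w,n_w}\right)$. A curve lying in the image of both gluing maps has a dual graph $G$ admitting edge contractions to both $\Gamma$ and $A$. The components of the fiber product are indexed by \emph{generic $(\Gamma,A)$-structures}: graphs $G$ with contractions $G\to\Gamma$ and $G\to A$ such that every edge of $G$ comes from an edge of $\Gamma$ or of $A$ (possibly both). Each component is, up to finite automorphism factors, a product $\prod_{u\in G}\Mb_{g_u,n_u}$. It maps to $\prod_{v\in\Gamma}\Mb_{g_v,n_v}$ by the gluing maps $\xi_{G_v}$, where $G_v$ is the preimage of $v$ in $G$. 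It maps to the other product analogously.

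Next, apply excess intersection on each component. The excess bundle is the direct sum, over the edges $e$ of $G$ coming from both $\Gamma$ and $A$, of the normal directions smoothing $e$. Its top Chern class is $\prod_e(-\psi_{h_e}-\psi_{h'_e})$, where $h_e,h'_e$ are the two half-edges of $e$. The formula then expresses $\xi_\Gamma^*\xi_{A*}\alpha$ as a sum over generic structures $G$ of
\[
\frac{1}{|\mathrm{Aut}|}\,\xi_{G\to\Gamma,*}\Bigl(\text{(pullback of $\alpha$ to $\textstyle\prod_{u\in G}\Mb_{g_u,n_u}$)}\cdot\prod_e(-\psi_{h_e}-\psi_{h'_e})\Bigr).
\]
Here $\xi_{G\to\Gamma}$ is the product over $v\in\Gamma$ of the gluing maps $\xi_{G_v}$, so it splits vertex by vertex.

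It remains to check that the pulled-back decorations are tautological and split as products over the vertices of $G$. A $\psi$ class at a half-edge pulls back to the $\psi$ class at the corresponding half-edge. A $\kappa$ class on a vertex $w$ of $A$ pulls back to the sum of the $\kappa$ classes of the vertices of $G$ mapping to $w$, by additivity of $\kappa$ under gluing. Hence every summand is a product, over $v\in\Gamma$, of pushforwards along $\xi_{G_v}$ of products of $\psi$ and $\kappa$ classes, and each factor is tautological on $\Mb_{g_v,n_v}$ by Definition \ref{bar-taut}. The Künneth statement follows, because the cycle class map sends such an exterior product to the tensor product of the cycle classes of its factors.

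The main obstacle is the middle step. One has to identify the fiber product and its components correctly, including non-reduced or automorphism issues when $\Gamma$ and $A$ share edges or loops. One also has to compute the excess bundle as the sum of the line bundles $T_{h_e}\otimes T_{h'_e}$ over the shared edges. I would first work this out when $\Gamma$ and $A$ each have a single edge, where everything can be checked by hand, and then argue that the general case is an iteration, since gluing maps factor as compositions of single-edge gluings.
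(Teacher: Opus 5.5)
Your proposal is correct and follows essentially the argument the paper relies on. The paper gives no proof of its own; it cites the excess-intersection computation of \cite[Appendix A]{GP}, which writes $\xi_\Gamma^*$ of a decorated boundary stratum as a sum over generic $(\Gamma,A)$-structures of pushforwards along $\prod_v \xi_{G_v}$ of the pulled-back $\psi$/$\kappa$ decorations times $\prod_e(-\psi_{h_e}-\psi_{h'_e})$, exactly as you describe. Your closing worries are milder than you fear: because the boundary is normal crossings, the fiber product is a smooth disjoint union of products of moduli spaces, and since the lemma only asks for membership in the span, automorphism coefficients play no role.
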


\subsection{Non-tautological classes}
Lemma \ref{pb} suggests a method for proving a class $v \in \ch^*(\Mb_{g,n})$ is non-tautological: if $\xi_\Gamma^*v$ has a non-tautological K\"unneth component, then $v$ cannot be tautological. Van Zelm executed this strategy when $\xi_\Gamma$ is the gluing map $i$ shown in \eqref{glue1}, and $v$ the fundamental class of the bielliptic locus. A curve is \emph{bielliptic} if it is a double cover of a genus $1$ curve. Let 
\[\mathcal{B}_g = \{[C] \in \M_g :  \exists \ C \to E \text{ of degree 2}\} \]
denote the locus of bielliptic curves.
By the Riemann--Hurwitz formula, a genus $12$ double cover of a genus $1$ curve has
$22$
branch points. The moduli of such a double cover is specified by a genus $1$ curve with $22$ points, 
 which is a $22$-dimensional moduli space. Furthermore, as the following lemma shows, any degree $2$ map from a genus $12$ curve to a genus $1$ curve is unique.

 \begin{lem}
 A genus $12$ curve cannot admit two distinct degree $2$ maps to genus $1$ curves.
 \end{lem}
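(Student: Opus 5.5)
Suppose for contradiction that $C$ has genus $12$ and admits two distinct degree $2$ maps $\phi_1\colon C \to E_1$ and $\phi_2\colon C \to E_2$ with $E_1, E_2$ of genus $1$. The plan is the standard Castelnuovo--Severi argument: combine the two maps into
\[\phi = (\phi_1,\phi_2)\colon C \to E_1 \times E_2,\]
and bound the genus of $C$ from the geometry of the image curve.

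First we reduce to the case where $\phi$ is birational onto its image. Since $\phi_1$ and $\phi_2$ are distinct degree $2$ maps (distinct meaning they do not differ by an isomorphism $E_1 \cong E_2$), they correspond to distinct hyperelliptic-type involutions $\iota_1 \neq \iota_2$ of $C$. If $\phi$ had degree $>1$ onto its image $D$, then $\phi$ would factor through a map $C \to D$ of degree dividing $2$. Because each $\phi_i$ factors through $D$, the degree would have to be exactly $2$, and then both $\phi_i$ would be the quotient by the same involution. That contradicts distinctness, so $\phi\colon C \to D$ is birational. Moreover $D$ has bidegree $(2,2)$ in the sense that $D\cdot(\{pt\}\times E_2) = 2$ and $D\cdot (E_1\times\{pt\})=2$; these are the degrees of the projections.

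Next we bound the arithmetic genus of $D$ by adjunction on the abelian surface $E_1\times E_2$. Since $K_{E_1\times E_2}=0$, we have $2p_a(D)-2 = D^2$. Write $f_1, f_2$ for the two fiber classes, so $f_1^2=f_2^2=0$ and $f_1 f_2=1$. The class of $D$ has intersection numbers $2,2$ with the fibers. By the Hodge index theorem, applied to $D$ and the ample class $f_1+f_2$ (or directly with $D - 2f_1 - 2f_2$ orthogonal to the hyperbolic plane spanned by $f_1,f_2$), we get
\[D^2 \le 2(D\cdot f_1)(D\cdot f_2) = 8.\]
Hence $p_a(D)\le 5$. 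Since $C$ is the normalization of $D$, this gives $g(C)\le p_a(D)\le 5 < 12$, a contradiction. This is exactly the Castelnuovo--Severi bound $g \le d_1 g_1 + d_2 g_2 + (d_1-1)(d_2-1) = 2+2+1 = 5$, which I could cite directly instead.

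The main obstacle is the Hodge index step. It is stated most cleanly with the Néron--Severi lattice: decompose $D = 2f_1 + 2f_2 + \delta$ with $\delta\cdot f_1=\delta\cdot f_2=0$. Then $\delta^2\le 0$ by Hodge index, since $\delta$ is orthogonal to an ample class, and so $D^2 = 8+\delta^2\le 8$. The birationality reduction also needs care in making ``distinct'' precise as distinct involutions; I would state this at the outset.
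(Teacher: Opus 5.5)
Your proof is correct and follows the paper's route: you map $C$ birationally into $E_1\times E_2$, apply adjunction using the trivial canonical class, and use the intersection numbers with the two fiber classes to get $p_a \le 5 < 12$. Where you bound $D^2 \le 8$ by the Hodge index theorem, the paper instead asserts that the image is numerically equivalent to $2F+2F'$; your version is the more careful one, because that equivalence can fail when $E_1$ and $E_2$ are isogenous (the N\'eron--Severi rank then exceeds $2$), and the inequality is all the argument needs.
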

 \begin{proof}
 Suppose for contradiction that $C$ is a genus $12$ curve and $C \to E$ and $C \to E'$ are distinct double covers of genus $1$ curves. Taking their product yields a map $C \to E \times E'$. Since the two maps are not equal, $C$ is birational onto its image in $E \times E'$. The canonical bundle of the image curve $\overline{C}$ can be computed using adjunction. Recalling that the canonical bundle of $E$ and $E'$ are trivial, we find that $K_{\overline{C}} = \O_{E \times E'}(\overline{C})|_{\overline{C}}$. Since $C \to E$ and $C \to E'$ are each degree $2$, the class of $\overline{C}$ is numerically equivalent $2F + 2F'$ where $F$ and $F'$ are the fibers of the projection maps from $E \times E'$ to $E$ and $E'$. (Here, \emph{numerical equivalence} is defined similarly to rational equivalence, but where the base curve of the family can have any genus. Importantly, dimension zero numerical equivalence classes have a well-defined degree.) Since $\deg F^2 = \deg F'^2 = 0$ and $\deg F\cdot F' = 1$, we find $\deg K_{\overline{C}} = (2F + 2F')^2 = 8$. But then the arithmetic genus of $\overline{C}$ would be $5$. However, the arithmetic genus of $\overline{C}$ is at least the genus of $C$, yielding a contradiction.
 \end{proof}

We conclude that the locus of bielliptic curves has dimension $22$. 
Since $\dim \M_{12} = 33$, we see that the codimension of $\B_{12} \subset \M_{12}$ is $11$. More generally, an analogous argument shows that the codimension of $\B_{g} \subset \M_g$ is $g - 1$.

As a first step, we shall show that the fundamental class of the closure of the bielliptic locus $[\overline{\B}_{12}] \in \ch^{11}(\Mb_{12})$ is not tautological.
One can understand the curves lying in $\overline{\B}_{12}$ using the theory of \emph{admissible covers}. 
It is not hard to see that
$i^{-1}(\overline{\B}_{12})$ contains the diagonal $\Delta \subset \Mb_{1,11} \times \Mb_{1,11}$. Indeed, when the same $11$-pointed elliptic curve is glued to itself, the resulting curve admits an involution (picture swapping the top and bottom halves of \eqref{glue2}) whose quotient is that genus $1$ curve. The key lemma established by Van Zelm is that $i^{-1}(\overline{\B}_{12}) = \Delta \cup Z$ where 
\[Z \subset \partial \M_{1,11} \times \Mb_{1,11} \cup \Mb_{1,11} \times \partial \M_{1,11}.\]
Thus, for some non-zero $\alpha$ (the multiplicity of the diagonal in the preimage), we have
\begin{equation} \label{xa} i^*[\overline{\B}_{12}] = \alpha[\Delta] + j_*z
\end{equation}
where $j:  \partial \M_{1,11} \times \Mb_{1,11} \cup\Mb_{1,11} \times \partial \M_{1,11}  \to \Mb_{1,11} \times \Mb_{1,11}$ is the inclusion of the boundary and $z$ is some class in the Chow groups of the boundary.

Recall that the cycle class map $\ch^*(X) \to \mathsf{H}^*(X)$ has image that lies in even degree.
In particular, any odd degree cohomology class on $\Mb_{g,n}$ is necessarily non-tautological. It turns out that the first non-zero odd degree cohomology group on one of these moduli spaces is $\mathsf{H}^{11}(\Mb_{1,11})$. (This really is the first non-vanishing cohomology group, as $\mathsf{H}^k(\Mb_{g,n}) = 0$ for odd $k \leq 9$ and all $g$ and $n$ \cite{ArbarelloCornalba,BFP}, and $\mathsf{H}^{11}(\Mb_{g,n}) = 0$ unless $g = 1$ and $n \geq 11$ \cite{h11}.) 

Assuming this fact and \eqref{xa}, we sketch a proof of the following.

\begin{lem} \label{abar}
The K\"unneth components of $i^*[\overline{\B}_{12}]$ are non-tautological. Hence, $[\overline{\B}_{12}]$ is non-tautological.
\end{lem}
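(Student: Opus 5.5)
The plan is to take the cycle class of \eqref{xa} and look at its K\"unneth component in $\mathsf{H}^{11}(\Mb_{1,11}) \otimes \mathsf{H}^{11}(\Mb_{1,11})$. First, the diagonal. Since $\Mb_{1,11}$ is a smooth proper Deligne--Mumford stack, Poincar\'e duality holds with rational coefficients, and the class of the diagonal decomposes as
\[[\Delta] = \sum_k \sum_{a} (-1)^{\epsilon} e_a^{(k)} \otimes (e_a^{(k)})^\vee,\]
where the inner sum runs over a basis $e_a^{(k)}$ of $\mathsf{H}^k(\Mb_{1,11})$ and $(e_a^{(k)})^\vee$ is the dual basis of $\mathsf{H}^{20-k}(\Mb_{1,11})$ ($\dim \Mb_{1,11} = 11$, so real dimension $22$, and $11+11=22$). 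The $(11,11)$ component is therefore a nondegenerate tensor in $\mathsf{H}^{11}\otimes \mathsf{H}^{11}$, and it is nonzero because $\mathsf{H}^{11}(\Mb_{1,11}) \neq 0$. Since $\alpha \neq 0$, the class $\alpha[\Delta]$ has a nonzero $(11,11)$ K\"unneth component.

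Next I need to show that $j_*z$ contributes nothing in the $(11,11)$ component. Write $j_* z$ as a sum of pushforwards from $\partial\M_{1,11}\times \Mb_{1,11}$ and from $\Mb_{1,11}\times\partial\M_{1,11}$. The boundary of $\Mb_{1,11}$ is covered by images of gluing maps from products of spaces $\Mb_{g',n'}$ with $g' \le 1$ and $n'$ smaller, or by $\Mb_{0,13}$ in the irreducible-node case. I would resolve $z$ to a class on these normalizations; with $\qq$ coefficients this is possible because the normalization maps are proper and surjective. Each such class then has cohomology K\"unneth decomposition in the cohomology of the factors. The pushforward of the first factor lands in $\mathsf{H}^{11}(\Mb_{1,11})$ only through odd-degree classes of degree $9$ on the boundary pieces (pushforward along a codimension-one gluing raises degree by $2$). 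By the quoted vanishing, $\mathsf{H}^{k}(\Mb_{g,n}) = 0$ for odd $k\le 9$, and a Künneth argument on the products shows these boundary pieces have no odd cohomology in degree $\le 9$. So the first tensor factor of the $(11,11)$ part of $j_*z$ vanishes, and symmetrically for the other piece.

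Hence the $(11,11)$ component of $i^*[\overline{\B}_{12}]$ equals $\alpha$ times that of $[\Delta]$, which is nonzero and lies in odd-degree cohomology. By Lemma \ref{pb}, if $[\overline{\B}_{12}]$ were tautological, every K\"unneth component of $i^*[\overline{\B}_{12}]$ would be a sum of tensor products of tautological classes, and these lie in even degree. That is a contradiction, so $[\overline{\B}_{12}]$ is non-tautological.

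The main obstacle is the boundary step. It requires checking carefully that the odd cohomology of each boundary normalization vanishes in degree $9$, including Künneth factors such as $\Mb_{1,m}\times\Mb_{0,k}$. It also requires that cohomology classes of $j_*z$ really factor through these normalizations. I would first try to handle this via Deligne's weight argument: the image of $\mathsf{H}^{9}(\widetilde{\partial})(-1)\to \mathsf{H}^{11}$ is zero because the source vanishes.
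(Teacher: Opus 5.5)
Your argument is correct, but it handles the boundary term $j_*z$ by a genuinely different route than the paper.

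The paper proves a Chow-theoretic statement. It stratifies $\partial\M_{1,11}$ into strata $\M_\Gamma$ and shows $\ch^*(\M_\Gamma)=\qq$, using the finite surjection from $\M_{1,n}\times\prod\M_{0,n_i}$ and Belorousski's theorem that $\ch^*(\M_{1,n})=\qq$ for $n\le 10$. By excision it concludes that all classes supported on the boundary are tautological, so every K\"unneth component of $j_*z$ has an even-degree tautological tensor factor.

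You never leave cohomology. You lift $z$ rationally to $\widetilde{\partial}\times\Mb_{1,11}$, where $\widetilde{\partial}$ is the disjoint union of $\Mb_{0,13}$ and the $\Mb_{1,12-s}\times\Mb_{0,s+1}$ with $s\ge 2$. The cycle class map is compatible with proper pushforward, and the Gysin map is $\nu_*\otimes\mathrm{id}$ with $\nu_*$ raising degree by $2$. So the $\mathsf{H}^{11}\otimes\mathsf{H}^{11}$ component of $j_*z$ can only come from $\mathsf{H}^{9}(\widetilde{\partial})\otimes\mathsf{H}^{11}(\Mb_{1,11})$. That group is zero by the odd-degree vanishing the paper quotes just before the lemma, together with K\"unneth. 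The two ``obstacles'' you flag both go through exactly as you describe, and no weight argument is needed.

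Your route needs no Chow computations for $\M_{1,n}$. It also avoids a point the paper's sketch passes over quickly. Knowing $\ch^*(\M_\Gamma)=\qq$ controls $\ch_*(\partial\M_{1,11})$, but drawing conclusions about Chow classes on the product $\partial\M_{1,11}\times\Mb_{1,11}$ implicitly requires a Chow--K\"unneth property of the strata. By contrast, cohomological K\"unneth on the smooth proper $\widetilde{\partial}$ is automatic.

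The paper's route, in exchange, yields the stronger geometric fact that every algebraic cycle supported on $\partial\M_{1,11}$ is tautological, and it showcases the stratification technique. Yours is closer in spirit to the Hodge-type argument given after the lemma, with cohomological degree playing the role of Hodge type.

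One small slip: the dual basis to a basis of $\mathsf{H}^k(\Mb_{1,11})$ lives in $\mathsf{H}^{22-k}$, not $\mathsf{H}^{20-k}$.
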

\begin{proof}
Given a basis $\{e_i\}_{i\in I}$ for $\mathsf{H}^*(\Mb_{1,11})$ with dual basis $\{\hat{e}_i\}_{i\in I}$ the cohomology class of the diagonal can be
written as
\[[\Delta] = \sum_{i \in I}(-1)^{\deg e_i} e_i \otimes \hat{e}_i.\]
Since  $\mathsf{H}^{11}(\Mb_{1,11}) \neq 0$, we see that $[\Delta]$ has a non-tautological K\"unneth component.
On the other hand, we show that all classes supported on the boundary of $\Mb_{1,11}$ are tautological. 
Let $\M_\Gamma \subset \partial \M_{1,11}$ be the locally closed stratum of the boundary consisting of curves whose dual graph is $\Gamma$. It suffices to show that $\ch^*(\M_\Gamma) = \qq$, since then the excision sequence will show that $\ch_*(\partial \M_{1,11})$ is generated by the fundamental classes of closures of boundary strata, all of which are tautological.

To prove the claim that $\ch^*(\M_\Gamma) = \qq$, consider the
finite surjective map
\[\M_{1,n} \times \prod \M_{0,n_i} \to \M_\Gamma\]
(or with the first factor omitted if $\Gamma$ is a graph with a single loop).
Now, each $\M_{0,n_i}$ is an open subset of affine space, so $\prod \M_{0,n_i}$ is an open subset of some affine space, say $\mathbb{A}^N$. Moreover, Belorousski \cite{Bel} proves, using techniques of a similar flavor to Section \ref{s3}, that $\ch^*(\M_{1,n}) = \qq$ for $n \leq 10$. 
Putting these facts together
we see that
\[\qq = \ch^*(\M_{1,n}) = \ch^*(\M_{1,n} \times \aa^N) \twoheadrightarrow \ch^*(\M_{1,n} \times \prod \M_{0,n_i}) \twoheadrightarrow \ch^*(\M_\Gamma). \]
Above, the first surjective arrow is the pullback along an open inclusion; the second is a pushforward along a surjective morphism with rational coefficients.

Having established the claim, we see that each K\"unneth component of $j_*z$ has at least one tautological tensor factor. In particular, $j_*z$ has no K\"unneth component in $\mathsf{H}^{11} \otimes \mathsf{H}^{11}$.
Thus, we conclude that the sum $\alpha [\Delta] + j_*z$ has a non-tautological K\"unneth component, completing the proof.
\end{proof}

To get from the fact that $[\overline{\B}_{12}]$ is non-tautological to the fact that $[\B_{12}]$ is non-tautological requires additional work since, a priori, there could be a non-tautological class on the boundary of $\Mb_{12}$ whose difference with $[\overline{\B}_{12}]$ is tautological.
In fact, although Van Zelm established that $[\overline{\B}_{g}] \notin \mathsf{R}^*(\Mb_g)$ for $g \geq 12$, it still remains an open problem whether or not $[\B_g]$ (or indeed any algebraic cycle) is non-tautological on $\M_g$ for $g = 13, 14, 15$.

The key to obtaining results about non-tautological classes on the \emph{interior} $\M_g$ is to tensor cohomology up to $\mathbb{C}$-coefficients and look at also at the Hodge type of the different K\"unneth components of $i^*[\overline{\B}_{12}]$. 
It turns out that, not only is $\mathsf{H}^{11}(\Mb_{1,11}) \neq 0$, but $\mathsf{H}^{11,0}(\Mb_{1,11}) \neq 0$.
This fact can also be used to simplify the argument that $[\overline{\B}_{12}]$ is non-tautological in Lemma \ref{abar}, as we now explain. With this additional information, one does not require a such thorough understanding of classes supported on the boundary of $\Mb_{1,11}$. This was one of the key insights of \cite{cc}.
The relevant fact is that a class pushed forward from proper subvariety will always have Hodge type $(p,q)$ with $p, q \geq 1$.
Thus, even without knowing that classes supported on $\partial \M_{1,11}$ are tautological, one can see that the class $j_*z$ will not be able to cancel the Hodge--K\"unneth component of $\alpha [\Delta]$ of type $\mathsf{H}^{11,0} \otimes \mathsf{H}^{0,11}$.

We now explain a proof of the following proposition using ideas in \cite{cc}. We stick to the case of the bielliptic locus in genus $12$ for clarity, but as mentioned earlier, these ideas generalize substantially, see \cite[Theorem B]{cc} for a general statement. Combined with \cite{FT}, there are now known non-tautological algebraic cycles on $\M_{g,n}$ for all but finitely many pairs $(g, n)$.

\begin{prop}
The class $[\B_{12}]$ is non-tautological.
\end{prop}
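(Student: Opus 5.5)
We argue by contradiction. Suppose $[\B_{12}] \in \mathsf{R}^{11}(\M_{12})$, say $[\B_{12}] = r|_{\M_{12}}$ for some polynomial $r$ in the kappa classes. Since the restriction $\mathsf{R}^*(\Mb_{12}) \to \mathsf{R}^*(\M_{12})$ is surjective, we may regard $r$ as a class $\overline{r} \in \mathsf{R}^{11}(\Mb_{12})$, for instance the same polynomial in the kappa classes on $\Mb_{12}$. Then $[\overline{\B}_{12}] - \overline{r}$ restricts to zero on $\M_{12}$. By excision (Section \ref{sec:ex}) it is therefore pushed forward from the boundary:
\[[\overline{\B}_{12}] = \overline{r} + k_*w, \qquad k\colon \partial\Mb_{12} \to \Mb_{12},\quad w \in \ch_*(\partial \Mb_{12}).\]
Pulling back along the gluing map $i$ of \eqref{glue1} and using \eqref{xa}, we get
\[\alpha[\Delta] + j_*z = i^*\overline{r} + i^*k_*w.\]
We will pass to cohomology with $\mathbb{C}$-coefficients and compare Hodge--K\"unneth components of type $\mathsf{H}^{11,0}(\Mb_{1,11}) \otimes \mathsf{H}^{0,11}(\Mb_{1,11})$.

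First, the left side. Writing the diagonal via a basis adapted to the Hodge decomposition, $[\Delta]$ has a nonzero component in $\mathsf{H}^{11,0}\otimes\mathsf{H}^{0,11}$, because $\mathsf{H}^{11,0}(\Mb_{1,11}) \neq 0$ and the Poincar\'e pairing pairs $\mathsf{H}^{11,0}$ perfectly with $\mathsf{H}^{0,11}$. The class $j_*z$ is pushed forward from the proper subvariety given by the boundary divisors. By the fact recalled above, its Hodge types are $(p,q)$ with $p,q\ge1$ in the relevant factor, so it contributes nothing of type $(11,0)\otimes(0,11)$. Second, the tautological term. By Lemma \ref{pb}, $i^*\overline{r}$ lies in the image of $\mathsf{R}^*(\Mb_{1,11}) \otimes \mathsf{R}^*(\Mb_{1,11})$, whose cohomology classes are algebraic and hence of type $(p,p)\otimes(q,q)$. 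So this term has no $(11,0)\otimes(0,11)$ component.

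The main obstacle is the remaining term $i^*k_*w$, a class supported on the boundary of $\Mb_{12}$ pulled back along $i$. To control it we would decompose $\partial\Mb_{12}$ into boundary divisors $\Delta_0, \Delta_1,\dots,\Delta_6$ and write $w$ as a sum of classes pushed forward from their normalizations $\Mb_{11,2}$ and $\Mb_{h,1}\times\Mb_{12-h,1}$. For each piece we then compute $i^*$ of such a pushforward by the excess-intersection/fiber-product description of gluing maps, as in \cite[Appendix A]{GP}. Each fiber-product component is either a locus mapping to a proper subvariety, essentially the boundary, of one factor $\Mb_{1,11}$, or a transverse piece, in which case the class is again pulled back from a further gluing through $\partial\Mb_{1,11}$ in one factor.

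If that description works, every Hodge--K\"unneth component of $i^*k_*w$ has at least one factor pushed forward from a proper subvariety of $\Mb_{1,11}$. Such a factor has Hodge type $(p,q)$ with $p,q\ge 1$, so no component of type $(11,0)\otimes(0,11)$ appears. This is the step needing the most care. The delicate case is the self-gluing of the two genus-one sides along the separating nodes, where both factors stay in the interior, and we would check that such terms still carry an excess normal-bundle class or a pulled-back class that forces them into type $(p,q)$ with $p,q\geq 1$.

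Combining the three computations, the $(11,0)\otimes(0,11)$ component of the left side is a nonzero multiple of that of $\alpha[\Delta]$, since $\alpha \neq 0$. The same component of the right side vanishes. This contradiction shows $[\B_{12}]$ is non-tautological.
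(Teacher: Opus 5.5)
Your overall architecture is the paper's. You use excision to write $[\overline{\B}_{12}]$ as a tautological class plus a boundary class, pull back along $i$, and compare $\mathsf{H}^{11,0}\otimes\mathsf{H}^{0,11}$ components. Your treatment of $[\Delta]$, $j_*z$ and $i^*\overline{r}$ is exactly the paper's. The gap is the one non-formal step: showing that $i^*k_*w$ has no $\mathsf{H}^{11,0}\otimes\mathsf{H}^{0,11}$ component. You leave this conditional (``if that description works''), and you put the difficulty in the wrong place. The eleven nodes of a curve in the image of $i$ are all non-separating, so there is no ``self-gluing along separating nodes.'' The separating divisors $\Delta_h$, $h\geq 1$, are in fact the easy case. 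The graph of $i$ has no separating edge, so it admits no contraction to the graph of $\xi_h\colon \Mb_{h,1}\times\Mb_{12-h,1}\to\Mb_{12}$. Hence $i^{-1}(\Delta_h)$ lies in the boundary of $\Mb_{1,11}\times\Mb_{1,11}$, and $i^*\xi_{h*}Z$ is supported there.

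The delicate contribution comes from $\Delta_0$, because $i$ maps entirely into $\Delta_0$. Indeed $i=\xi_{\mathrm{irr}}\circ i'$, where $i'\colon \Mb_{1,11}\times\Mb_{1,11}\to\Mb_{11,2}$ glues ten of the eleven pairs. The relevant fiber-product component is therefore the excess one, not a transverse one. It is all of $\Mb_{1,11}\times\Mb_{1,11}$ and does not map into the boundary of either factor, so your proposed mechanism (a factor pushed forward from a proper subvariety) does not handle it.

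What the paper uses is the excess intersection formula. The contribution is $i'^*\bigl((-\psi_1-\psi_2)\cdot Z\bigr)$, one such term for each choice of the $\Delta_0$-node; components of the fiber product with a twelfth edge are boundary-supported and harmless. The key computation is that $i'^*\psi_1=\psi_{11}\otimes 1$ and $i'^*\psi_2=1\otimes\psi_{11}$, each of type $(1,1)$ in one factor. Multiplying by such a class shifts that factor's Hodge type from $(p,q)$ to $(p+1,q+1)$, so no $\mathsf{H}^{11,0}\otimes\mathsf{H}^{0,11}$ component can survive. Your closing remark about ``an excess normal-bundle class'' is the right instinct. But without the factorization $i=\xi_{\mathrm{irr}}\circ i'$ and the computation of $i'^*\psi_j$, the crucial vanishing is not established.
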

\begin{proof}
Suppose for contradiction that $[\B_{12}]$ is tautological. Then, by excision,
there exists an identity
\[[\overline{\B}_{12}] = T +  B \in \ch^{11}(\Mb_{12}),\]
where $T \in \mathsf{R}^{11}(\Mb_{12})$ and $B$ is a class supported on the boundary. Applying pullback along the gluing map \eqref{glue1} to this formula, and also using \eqref{xa} we find
\begin{equation} \label{theone} \alpha [\Delta] + j_*z = i^*[\overline{\B}_{12}] = i^* T + i^*B.
\end{equation}
We now consider the images of the above classes under the cycle class map.

Since $\mathsf{H}^{11,0}(\Mb_{1,11}) \neq 0$, we have that $[\Delta]$ has a non-zero Hodge--K\"unneth component of type $\mathsf{H}^{11,0} \otimes \mathsf{H}^{0,11}$.
Since $j_*z$ is pushed forward from the boundary, it has no Hodge--K\"unneth component of type $\mathsf{H}^{11,0} \otimes \mathsf{H}^{0,11}$. 
Hence, the left-hand side of \eqref{theone} has a non-zero Hodge--K\"unneth component of type $\mathsf{H}^{11,0} \otimes \mathsf{H}^{0,11}$. Meanwhile, since it is tautological, $i^*T$ has no Hodge--K\"unneth component of type $\mathsf{H}^{11,0} \otimes \mathsf{H}^{0,11}$. Finally, we claim that $i^*B$ has no 
Hodge--K\"unneth component of type $\mathsf{H}^{11,0} \otimes \mathsf{H}^{0,11}$.  This will produce a contradiction and complete the proof.

By writing $B$ as a linear combination of fundamental classes of subvarieties pushed forward from boundary divisors, to prove the claim, it suffices to show that $i^*\xi_*Z$ has no 
Hodge--K\"unneth component of type $\mathsf{H}^{11,0} \otimes \mathsf{H}^{0,11}$ where $\xi$ is one of the gluing maps
\[\xi_{\mathrm{irr}}\colon \Mb_{11,2} \to \Mb_{12} \qquad \text{or} \qquad \xi_{a}\colon \Mb_{a,1} \times \Mb_{12-a,1} \to \Mb_{12},\]
and $Z$ is the fundamental class of an irreducible subvariety.

To treat the first case, notice that the image of $\xi_{\mathrm{irr}}$ contains the image of $i$; indeed, the stable graph associated to $i$ edge contracts to the single loop graph associated to $\xi_{\mathrm{irr}}$. In particular, we may write $i = \xi_{\mathrm{irr}} \circ i'$ where $i':\Mb_{1,11} \times \Mb_{1,11} \to \Mb_{11,2}$ glues the first ten pairs of points, leaving two marked points. Then, using the excess intersection formula, one finds
\[i^*\xi_{\mathrm{irr}*}Z = i'^* \xi_{\mathrm{irr}}^*\xi_{\mathrm{irr}*} Z = i'^*(-\psi_1 - \psi_2) Z.\]
The important fact is that $\psi_j \in \mathsf{H}^{1,1}(\Mb_{11,2})$ and furthermore, $i'^*\psi_j$ is either $\psi_{11} \otimes 1$ or $1 \otimes \psi_{11}$, and so lives in
$\mathsf{H}^{1,1} \otimes \mathsf{H}^{0,0}$ or $\mathsf{H}^{0,0} \otimes \mathsf{H}^{1,1}$.
In particular, the right-hand side of the above equation has no Hodge--K\"unneth component of type $\mathsf{H}^{11,0} \otimes \mathsf{H}^{0,11}$.

For the second case, notice that the image of $\xi_a$ does not contain the image of $i$; indeed, the stable graph associated to $i$ admits no edge contraction to the graph associated to $\xi_a$, which has a separating edge. It follows that $i^*\xi_{a*} Z$ is supported on the boundary of $\Mb_{1,11} \times \Mb_{1,11}$. From this, one sees that it also has no Hodge--K\"unneth component of type $\mathsf{H}^{11,0} \otimes \mathsf{H}^{0,11}$.
\end{proof}

\end{document}